\documentclass[reqno,a4paper, 11pt]{amsart}

\usepackage[a4paper=true,pdfpagelabels]{hyperref}
\usepackage{graphicx}
\usepackage{mathabx}
\usepackage[ansinew]{inputenc}
\usepackage{amsfonts,epsfig}
\usepackage{latexsym}
\usepackage{amsmath}
\usepackage{amssymb}
\usepackage{color}

\newtheorem{theorem}{Theorem}
\newtheorem{lemma}[theorem]{Lemma}

\newtheorem{proposition}[theorem]{Proposition}

\newtheorem{lettertheorem}{Theorem}
\newtheorem{letterlemma}[lettertheorem]{Lemma}

\theoremstyle{definition}

\theoremstyle{remark}

\numberwithin{equation}{section}

\setlength\arraycolsep{2pt}

\newcommand{\B}{\mathcal{B}}

\newcommand{\D}{\mathbb{D}}
\newcommand{\DD}{\widehat{\mathcal{D}}}
\newcommand{\Dd}{\widecheck{\mathcal{D}}}
\newcommand{\DDD}{\mathcal{D}}
\newcommand{\N}{\mathbb{N}}
\newcommand{\RR}{\mathbb{R}}

\newcommand{\C}{\mathbb{C}}

\newcommand{\e}{\varepsilon}
\newcommand{\ep}{\varepsilon}

\renewcommand{\phi}{\varphi}

\newcommand{\M}{\mathcal{M}}

\def\BMO{\mathord{\rm BMO}}
\def\MO{\mathord{\rm MO}}
\def\BO{\mathord{\rm BO}}
\def\BA{\mathord{\rm BA}}

       \def\b{\beta}        \def\g{\gamma}
           \def\e{\varepsilon}
     \def\om{\omega}      
       \def\t{\theta}       
                  \def\z{\zeta}

\def\omg{{\widehat{\omega}}}

\renewcommand{\H}{\mathcal{H}}

\newenvironment{Prf}{\noindent{\emph{Proof of}}}
{\hfill$\Box$ }

\addtolength{\hoffset}{-1.5cm}
\addtolength{\textwidth}{3cm}
\addtolength{\voffset}{-1cm}
\addtolength{\textheight}{2cm}

\begin{document}

\title[Small Hankel operator induced by measurable symbol]{Small Hankel operator induced by measurable symbol acting on weighted Bergman spaces}

\keywords{Bergman kernel, Bergman projection, Bergman space, Bloch space, doubling weight, Hankel operator.}

\thanks{The research reported in this paper is supported in part by Ministerio de Ciencia e Innovaci\'on, Spain, project PID2022-136619NB-I00, La Junta de Andaluc{\'i}a, project FQM210 and the Academy of Finland project no.~356029.}

\author{Jos\'e \'Angel Pel\'aez}
\address{Departamento de An\'alisis Matem\'atico, Universidad de M\'alaga, Campus de
Teatinos, 29071 M\'alaga, Spain} 
\email{japelaez@uma.es}

\author[Jouni R\"atty\"a]{Jouni R\"atty\"a}
\address{University of Eastern Finland\\
Department of Physics and Mathematics\\
P.O.Box 111\\FI-80101 Joensuu\\
Finland}
\email{jouni.rattya@uef.fi}

\subjclass[2010]{Primary 30H20, 30H35, 47B34}

\date{\today}

\maketitle

\begin{abstract}
The boundedness of the small Hankel operator $h^\omega_{f}(g)=\overline{P_\omega}(fg)$ induced by a measurable symbol $f$ and the Bergman projection $P_\omega$ associated to a radial weight $\omega$ acting from the weighted Bergman space $A^p_\omega$ to its conjugate analytic counterpart $\overline{A^p_\omega}$ is characterized on the range $1<p<\infty$ when $\omega$ belongs to the class $\mathcal{D}$ of radial weights admitting certain two-sided doubling conditions. On the way to the proof a sharp integral estimate for certain modified Bergman kernels is obtained. 
\end{abstract}

\section{Introduction and main results}

Let $\H(\D)$ denote the space of analytic functions in the unit disc $\D$. A non-negative function $\om\in L^1([0,1))$ is called a radial weight if
$\om(z)=\om(|z|)$ for all $z\in\D$. For $0<p<\infty$ and such an $\omega$, the weighted Lebesgue space $L^p_\om$ consists of complex-valued measurable functions $g$ on~$\D$ such that
	$$
	\|g\|_{L^p_\omega}^p=\int_\D|g(z)|^p\omega(z)\,dA(z)<\infty,
	$$
where $dA(z)=\frac{dx\,dy}{\pi}$ is the normalized Lebesgue area measure on $\D$. The corresponding weighted Bergman space is $A^p_\om=L^p_\omega\cap\H(\D)$. Throughout this paper we assume that the tail integral $\widehat{\om}(z)=\int_{|z|}^1\om(s)\,ds$ is strictly positive for all $z\in\D$, for otherwise $A^p_\om=\H(\D)$ due to the radiality of $\om$. As usual,~$A^p_\alpha$ stands for the classical weighted Bergman space induced by the standard radial weight $\omega(z)=(\alpha+1)(1-|z|^2)^\alpha$ with $-1<\alpha<\infty$.

Every radial weight $\om$ induces the orthogonal Bergman projection from $L^2_\om$ to $A^2_\om$ given by 
		\begin{equation*}
    P_\om(g)(z)=\int_{\D}g(\z)\overline{B^\om_{z}(\z)}\,\om(\z)dA(\z),\quad z\in\D,
    \end{equation*}
where $B^\om_{z}$ are the reproducing kernels of the Hilbert space $A^2_\om$. This projection in turn, together with a measurable symbol $f$, induces the small Hankel operator 
	$$
  h^\om_{f}(g)(z)=\overline{P_\om}(fg)(z)=\int_\D f(\z)g(\z)B^\om_z(\z)\om(\z)\,dA(\z),\quad z\in\D.
  $$
The existing literature does not seem to contain much information on this operator unless the symbol is analytic. However, it is known that when $\omega$ is the standard radial weight and $f$ is an admissible measurable function, then $h^\om_{f}=h^\alpha_{f}$ satisfies
	\begin{equation}\label{lhgkjhgkjhg}
	\|h^\alpha_{f}\|_{A^2_\alpha\to\overline{A^2_\alpha}}
	\asymp\sup_{z\in\D}\,(1-|z|)^{2+\alpha}\left|\int_\D f(\z)\frac{(1-|\zeta|)^{\alpha}}{(1-\overline{z}\zeta)^{2(2+\alpha)}}dA(\zeta)\right|
	\end{equation}
by \cite{ZhuTAMS91,Zhu}. Other comments on known results are postponed for a moment.

The main purpose of this paper is to establish a description of the admissible measurable symbols $f$ such that $h^\om_f:A^p_\omega\to\overline{A^p_\om}$
is bounded, provided $1<p<\infty$ and $\omega$ belongs to a relatively wide class of radial weights. To state our main results, some more notation are in order. A radial weight $\om$ belongs to $\DD$ if there exists $C=C(\om)>0$ such that 
	$$
	\omg(r)\le C\omg\left(\frac{1+r}{2}\right),\quad r\to1^-,
	$$
and $\om\in\Dd$ if  
	$$
	\omg(r)\ge C\omg\left(1-\frac{1-r}{K}\right),\quad r\to1^-,
	$$ 
for some $K=K(\om)>1$ and $C=C(\om)>1$. The intersection $\DD\cap\Dd$ is denoted by $\DDD$. It contains the standard radial weights but exponentially decreasing weights do not belong to~$\DD$. The definitions of both $\DD$ and $\Dd$ have their obvious geometric interpretations. Because of these classes are defined by integral conditions, the containment in $\DD$ or $\Dd$ does not require differentiability, continuity or strict positivity. In fact, weights in these classes may vanish on a relatively large part of each outer annulus $\{z:r\le|z|<1\}$ of $\D$. It is known that the classes $\DD$ and $\DDD$ emerge naturally in many instances in operator theory related to Bergman spaces. For example, they are intimately related to Littlewood-Paley estimates, bounded Bergman projections and the Dostani\'c problem~\cite{PelRat2020}. For basic properties and illuminating examples of weights in these and other related classes of weights, see \cite{PelSum14,PR,PR2020,PelRat2020} and the relevant references therein. 

The precise statement of our main result involves the transformation 
	\begin{equation}\label{V}
	V_{\om,\nu}(f)(z)
	=\nu(z)\int_\D f(\zeta)\overline{B^{\om\nu}_z(\zeta)}\om(\zeta)\,dA(\zeta)
	,\quad z\in\D,\quad f\in L^1_\om,
	\end{equation} 
which makes sense for each radial weight $\omega$ and each radial function $\nu:\D\to [0,\infty)$ such that also the product $\nu\omega$ is a weight. The most natural starting point for measurable symbols in our setting is the requirement 
	\begin{equation}\label{fubinicondition}
  \|f\|_{L^1_{\om_{\log}}}=\int_\D|f(z)|\left(\log\frac{e}{1-|z|}\right)\om(z)\,dA(z)<\infty,
	\end{equation}
which we simply denote as $f\in L^1_{\om_{\log}}$. With these preparations we can state the main result of this study.

\begin{theorem}\label{Theorem:hankel-characterization}
Let $1<p<\infty$, $\om\in\DDD$ and $f\in L^1_{\om_{\log}}$. Then $h^\om_{\overline{f}}: A^p_\om\to A^p_\omega$ is bounded if and only if there exists $n_0=n_0(\om)\in\N$ such that, for each $n\ge n_0$, the weight $\nu(z)=(1-|z|)^n$ satisfies $V_{\om,\nu}(f)\in L^\infty$. Moreover, 
	\begin{equation}\label{lweqfjblqfekjbqfwelkjb}
	\|h^\om_{\overline{f}}\|_{A^p_\om\to\overline{A^p_\om}}
	\asymp\|V_{\om,\nu}(f)\|_{L^\infty}
	\end{equation}
for each fixed $n\ge n_0$.
\end{theorem}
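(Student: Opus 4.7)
The plan is to reformulate the boundedness of $h^\om_{\overline f}$ as a bilinear estimate on $A^p_\om\times A^{p'}_\om$, to replace the symbol $\overline f$ inside this bilinear form by $\overline{V_{\om,\nu}(f)}$ using the reproducing kernel of the heavier weight $\om\nu$, and finally to obtain the non-trivial direction by testing on analytic factorisations of the modified kernel $B^{\om\nu}_z$.

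First I would pair $h^\om_{\overline f}(g)$ with an arbitrary $h\in A^{p'}_\om$: Fubini together with $B^\om_z(\zeta)=\overline{B^\om_\zeta(z)}$ and the reproducing property of $B^\om$ gives
\begin{equation*}
\int_\D h^\om_{\overline f}(g)(z)\,h(z)\,\om(z)\,dA(z)=\int_\D \overline{f(\zeta)}\,g(\zeta)h(\zeta)\,\om(\zeta)\,dA(\zeta).
\end{equation*}
Because $\om\in\DDD$ ensures that $P_\om$ is bounded on $L^p_\om$, the dual of $\overline{A^p_\om}$ is $A^{p'}_\om$ under the pairing $(F,h)\mapsto\int F\,h\,\om\,dA$, and so $\|h^\om_{\overline f}\|_{A^p_\om\to\overline{A^p_\om}}$ is comparable to the norm of the bilinear form $\Lambda(g,h)=\int \overline f\,gh\,\om\,dA$ on $A^p_\om\times A^{p'}_\om$.

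Next, for $n\ge n_0$ large enough that $\om\nu\in\DDD$ and the kernel $B^{\om\nu}$ reproduces products $gh$ (which lie in $A^1_{\om\nu}$ by H\"older and $\nu\le 1$), I would substitute
\begin{equation*}
g(\zeta)h(\zeta)=\int_\D g(w)h(w)\overline{B^{\om\nu}_\zeta(w)}\,\om(w)\nu(w)\,dA(w)
\end{equation*}
into $\Lambda(g,h)$, interchange the order of integration (justified by $f\in L^1_{\om_{\log}}$ combined with the sharp integral estimate for $B^{\om\nu}_z$ advertised in the abstract), and use $B^{\om\nu}_\zeta(w)=\overline{B^{\om\nu}_w(\zeta)}$ to arrive at the identity
\begin{equation*}
\Lambda(g,h)=\int_\D g(w)h(w)\,\overline{V_{\om,\nu}(f)(w)}\,\om(w)\,dA(w),
\end{equation*}
the outside factor $\nu(w)$ in $V_{\om,\nu}(f)(w)$ exactly absorbing the $\nu(w)$ coming from the reproducing measure $\om\nu$. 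H\"older's inequality in $L^p_\om\times L^{p'}_\om$ then yields the easy direction $\|h^\om_{\overline f}\|\lesssim\|V_{\om,\nu}(f)\|_{L^\infty}$ of \eqref{lweqfjblqfekjbqfwelkjb}.

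For the reverse inequality I would test $\Lambda$ on analytic pairs $g_z,h_z$ whose product is a fixed constant multiple of $B^{\om\nu}_z$: a direct computation from the definition of $V_{\om,\nu}(f)$ then gives $\Lambda(g_z,h_z)=c\,\overline{V_{\om,\nu}(f)(z)}/\nu(z)$, so the bilinear estimate becomes
\begin{equation*}
|V_{\om,\nu}(f)(z)|\lesssim\|h^\om_{\overline f}\|\,\nu(z)\,\|g_z\|_{A^p_\om}\|h_z\|_{A^{p'}_\om}.
\end{equation*}
Uniformity in $z$ thus reduces to ensuring $\|g_z\|_{A^p_\om}\|h_z\|_{A^{p'}_\om}\lesssim 1/\nu(z)$. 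In the model case of the standard weight $\om(\zeta)=(1-|\zeta|^2)^\a$, the natural choice is $g_z(\zeta)=(1-\overline z\zeta)^{-a}$, $h_z(\zeta)=(1-\overline z\zeta)^{-b}$ with $a+b=2+\a+n$ and $ap,bp'>2+\a$, and the threshold $n_0$ is precisely what guarantees the existence of such exponents. For general $\om\in\DDD$ the analogous factorisation will be built from Bergman kernels of auxiliary doubling weights. The main obstacle will be precisely this last step: establishing sharp pointwise and integral estimates of $B^{\om\nu}_z$ for $\om\in\DDD$, together with matching $A^p_\om$ and $A^{p'}_\om$ norm control for the test factors, so that the lower and upper bounds in \eqref{lweqfjblqfekjbqfwelkjb} come out with equivalent constants rather than with only a logarithmic discrepancy. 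This sharp modified-kernel estimate, the technical heart of the proof, is precisely what the abstract announces.
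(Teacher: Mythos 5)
Your overall architecture coincides with the paper's: the boundedness of $h^\om_{\overline f}$ is reduced to the bilinear form $\Lambda(g,h)=\int_\D\overline f\,gh\,\om\,dA$, the sufficiency half is obtained by replacing $\overline f$ by $\overline{V_{\om,\nu}(f)}$ through the reproducing kernel of $\om\nu$, and the necessity half is obtained by testing $\Lambda$ on analytic factorizations of $B^{\om\nu}_z$. Your sufficiency argument (reproducing formula in $A^1_{\om\nu}$, Fubini justified by $\|B^{\om\nu}_\zeta\|_{L^1_{\om\nu}}\asymp\log\frac{e}{1-|\zeta|}$ and $f\in L^1_{\om_{\log}}$, then H\"older) is a sound variant of the paper's route, which instead proves $h^\om_{\overline f}=h^\om_{\overline{V_{\om,\nu}(f)}}$ on $H^\infty$ (Lemmas~\ref{lemma:hankel-P} and \ref{lemma:V}) and invokes the boundedness of the maximal projection $P^+_\om$ on $L^p_\om$; note that your version needs, in addition, the identification of $(A^{p'}_\om)^\star$ with $\overline{A^p_\om}$ (i.e.\ $P_\om$ bounded on $L^{p'}_\om$, true for $\om\in\DDD$) plus a density argument to pass from the form to the operator, which the paper's route avoids.

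The genuine gap is in the necessity direction: the test pair is never constructed for general $\om\in\DDD$, and the route you indicate would fail. A factorization ``built from Bergman kernels of auxiliary doubling weights'' is not viable: such kernels have no closed formula, may vanish, and a product of two of them is essentially never again a kernel, so there is no evident candidate pair with product $B^{\om\nu}_z$ and norm product $\lesssim(1-|z|)^{-n}=1/\nu(z)$. The paper's device is the crux you are missing: for $p\ge2$ take $g_z(\zeta)=(1-\overline z\zeta)^nB^{\om\nu}_z(\zeta)$ and $h_z(\zeta)=(1-\overline z\zeta)^{-n}$, so that $g_zh_z=B^{\om\nu}_z$. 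Then $\|h_z\|_{A^{p'}_\om}^{p'}=\int_\D\om(\zeta)|1-\overline z\zeta|^{-np'}dA(\zeta)\lesssim\widehat\om(z)(1-|z|)^{1-np'}$ by Lemma~\ref{Lemma:weights-in-D-hat}(iii), and this is exactly where the threshold $n_0$ comes from ($n\ge\lambda(\om)+1$); and $\|g_z\|_{A^p_\om}^{p}\lesssim(\widehat\om(z)(1-|z|))^{1-p}$ by the modified-kernel estimate of Lemma~\ref{kernelmix-general} applied to $\om\nu=\om_{[n]}\in\DDD$ together with $\widehat{\om_{[n]}}\asymp\widehat\om\,(1-\cdot)^n$ (Lemma~\ref{room}(ii)). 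These two bounds multiply to exactly $(1-|z|)^{-n}$, giving \eqref{lweqfjblqfekjbqfwelkjb} from below. Note also that the sharp estimate announced in the abstract concerns $(1-\overline z\zeta)^NB^\om_z(\zeta)$, not $B^{\om\nu}_z$ itself, and it is only available for exponents $\ge2$; hence the kernel factor must be placed in whichever of $A^p_\om$, $A^{p'}_\om$ has exponent at least $2$, forcing a case split $p\ge2$ versus $p<2$ (swap $g_z$ and $h_z$) that is absent from your sketch. Without the explicit factorization, the choice of $n_0$, and this estimate, the lower bound in \eqref{lweqfjblqfekjbqfwelkjb} is not established.
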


As mentioned already, Theorem~\ref{Theorem:hankel-characterization} is known for the standard radial weights when $p=2$ \cite{ZhuTAMS91,Zhu}, and the connection between \eqref{lhgkjhgkjhg} and \eqref{lweqfjblqfekjbqfwelkjb} becomes obvious by choosing $\om$ and $\nu$ to be the standard weights $(1+\alpha)(1-|z|^2)^\alpha$ and $(3+\alpha)(1-|z|^2)^{2+\alpha}$, respectively. Moreover, the question for which analytic symbols $f$ the operator $h^\om_{f}: A^p_\omega\to \overline{A^q_\nu}$ is bounded when $0<p,q<\infty$ and $\omega,\nu\in\DDD$ was recently solved in \cite{DRWW2022}. Therefore the main novelty of Theorem~\ref{Theorem:hankel-characterization} stem from two facts: it concerns small Hankel operators induced by measurable symbols on the range $1<p<\infty$, and it concerns weights in $\DDD$ which is in a sense the largest class of radial weights that contains the standard weights as smooth prototypes. This latter fact forces us deal with the Bergman reproducing kernels $B^\om_z$ induced by $\omega\in\DDD$. Observe that these kernels, on contrary to the standard ones $(1-\overline{\z}z)^{-(2+\alpha)}$, do not have a neat formula in general and may have zeroes \cite{ BoudreauxJGA19,Perala}.

Let us recall that the big Hankel operator and the small Hankel operator are essentially the same on the Hardy spaces in the sense of unitary transformation \cite{Ni} whereas they behave different acting  on  Bergman spaces \cite{Zhu}. In fact while there exist plenty of studies on big Hankel operators only few works deal with the boundedness of the small Hankel operators on Bergman spaces \cite{Aleman:Constantin2004,BDTW2021,BL2005, DRWW2022,JPR87,Oliver,Yamaji,ZhuTAMS91}.  Among this non-exhaustive but representative collection, only the works by Janson, Petree and Rochberg \cite{JPR87}, Yamaji \cite{Yamaji} and Zhu \cite{ZhuTAMS91} concern small Hankel operators induced by measurable symbols, and none of these ones involve kernels induced by weights in such a class as $\DDD$. 

As for the proof of Theorem~\ref{Theorem:hankel-characterization},
it comes by no means for free and actually require a number of technically demanding steps. Indeed, for the necessity part we will use the kernel estimate
		\begin{equation}\label{eq:kernelmix-general'}
    \int_\D|(1-\overline{z}\z)^NB^\om_z(\z)|^p\nu(\z)\,dA(\z)
		\le C\left(\int_0^{|z|}\frac{\widehat{\nu}(t)}{\widehat{\om}(t)^p(1-t)^{p(1-N)}}\,dt+1\right),\quad z\in\D,
    \end{equation}
valid for $\om\in\DD$, $N\in\N$ and $2\le p<\infty$. Section~\ref{s2} is dedicated to the proof of this result, and the result itself is stated as Lemma~\ref{kernelmix-general}. The case $N=0$ in \eqref{eq:kernelmix-general'} was established in \cite[Theorem~1]{PR2016/1}, and has turned out to be a premordial tool in several question related to the study of concrete operators on weighted Bergman spaces \cite{DRWW2022,KR,PPR2020,PR2016/1,PelRat2020,PRMZ2023}.
 
In the case of analytic symbols it is known that $h^\om_{f}:A^p_\omega\to\overline{A^p_\om}$ is bounded for some (equivalently for all) $1<p<\infty$ if and only if $f$ belongs to the classical Bloch space $\B$ by \cite[Theorem~3]{DRWW2022}. The proof there uses Lemma~\ref{kernelmix-general}, and it is therefore not independent of our study. For the sake of simplicity and completeness, we will provide a direct proof of the aforementioned result on small Hankel operators as a byproduct of Theorem~\ref{Theorem:hankel-characterization}. This allows one to avoid some of the heavy machinery employed in the proof \cite[Theorem~3]{DRWW2022}. Indeed, we will show in Proposition~\ref{pr:Blochdescription} below that $\|f\|_{\B}\asymp\|V_{\om,\nu}(f)\|_{L^\infty}$ with $\nu(z)=(1-|z|)^n$. Therefore, in the case of anti-analytic symbols, the following neat result confirms the fact that in many instances the conditions that characterize the bounded Hankel operators (small or big) and the integration operator, defined by 
	$$
	T_g(f)(z)=\int_0^zg'(\zeta)f(\zeta)\,d\zeta,\quad z\in\D,
	$$
on weighted Bergman spaces with $1<p<\infty$ are the same~\cite{AlCo,DRWW2022,KR,PelSum14,PR,Zhu}. 

\begin{theorem}\label{proposition:Bloch-necessary}
Let $1<p<\infty$, $\om\in\DDD$ and $f\in\H(\D)$. Then $h^\om_{\overline{f}}:A^p_\om\to\overline{A^p_\om}$ is bounded if and only if $f\in\B$. Moreover,
	$$
	\|h^\om_{\overline{f}}\|_{A^p_\om\to\overline{A^p_\om}}\asymp\|f\|_{\B}.
	$$
\end{theorem}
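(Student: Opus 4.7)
The plan is to derive Theorem~\ref{proposition:Bloch-necessary} directly from Theorem~\ref{Theorem:hankel-characterization} by supplying the promised description of the Bloch space in terms of the transformation $V_{\om,\nu}$. Choosing $\nu(z)=(1-|z|)^n$ with $n\ge n_0(\om)$, Theorem~\ref{Theorem:hankel-characterization} already gives $\|h^\om_{\overline{f}}\|_{A^p_\om\to\overline{A^p_\om}}\asymp \|V_{\om,\nu}(f)\|_{L^\infty}$, so everything reduces to proving that, for every analytic $f$,
$$
\|V_{\om,\nu}(f)\|_{L^\infty}\asymp \|f\|_{\B}.
$$
The $L^1_{\om_{\log}}$ integrability needed to invoke Theorem~\ref{Theorem:hankel-characterization} is automatic for $f\in\B$, since Bloch functions grow no faster than $\log\frac{e}{1-|z|}$ and $\om\in\DDD$ has enough integrability of $\widehat{\om}$ against such logarithmic factors.

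For the $L^\infty$-to-$\B$ equivalence I would first exploit the radial symmetry of $\om$ and $\nu$ to reduce $V_{\om,\nu}(f)$ to a Taylor-series manipulation. The monomials $\{\zeta^k\}$ are orthogonal in $A^2_{\om\nu}$ and in $A^2_\om$, so writing $B^{\om\nu}_z(\zeta)=\sum_k (\overline{z}\zeta)^k/(2(\om\nu)_{2k+1})$ with $(\om\nu)_{2k+1}=\int_0^1 r^{2k+1}(\om\nu)(r)\,dr$, and similarly for $\om_{2k+1}$, yields
$$
V_{\om,\nu}(f)(z)=\nu(z)\sum_{k=0}^{\infty}\widehat{f}(k)\,\frac{\om_{2k+1}}{(\om\nu)_{2k+1}}\,z^k
$$
for $f(\zeta)=\sum_k \widehat{f}(k)\zeta^k$. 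The core of the proof is then to identify this series as a fractional-derivative transform of $f$ and to match it against a classical Bloch characterization.

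The key analytic input is the moment-ratio estimate
$$
\frac{\om_{2k+1}}{(\om\nu)_{2k+1}}\asymp (k+1)^n,\qquad k\to\infty,
$$
with constants depending on $\om$ and $n$. This follows from moment asymptotics available for weights in $\DDD$: the doubling conditions $\om\in\DD$ and $\om\in\Dd$ force the mass of $\om_s$ to concentrate around $r=1-1/s$, so the extra factor $(1-r)^n$ multiplies the moment by $\asymp(1+s)^{-n}$. Plugging this back, one obtains $V_{\om,\nu}(f)(z)\asymp (1-|z|)^n f^{(n)}(z)$ up to controlled lower-order terms coming from the passage between $(z\partial_z)^n$ and $\partial_z^n$, and the equivalence with $\|f\|_\B$ is then a consequence of the standard derivative characterization $\|f\|_\B \asymp \sup_{z\in\D}(1-|z|^2)^n|f^{(n)}(z)|$, valid for every $n\ge 1$.

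The main obstacle is the moment-ratio estimate: while heuristically transparent, turning it into a genuine two-sided bound uniform in $k$ requires the full strength of $\DDD$, in particular the comparisons $\widehat{\om}(1-1/k)\asymp (k+1)\,\om_{2k+1}$ and the analogue for $\om\nu$, both of which are sharp manifestations of the two-sided doubling of $\widehat{\om}$. Once this is set up, combining it with Theorem~\ref{Theorem:hankel-characterization} immediately produces the claimed equivalence $\|h^\om_{\overline{f}}\|_{A^p_\om\to\overline{A^p_\om}}\asymp \|f\|_\B$.
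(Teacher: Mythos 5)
Your overall reduction is the same as the paper's: combine Theorem~\ref{Theorem:hankel-characterization} (with $\nu(z)=(1-|z|)^n$, $n\ge n_0$) with the equivalence $\|V_{\om,\nu}(f)\|_{L^\infty}\asymp\|f\|_{\B}$, which is precisely the content of Proposition~\ref{pr:Blochdescription}. The gap lies in your proof of that equivalence. From the series representation $V_{\om,\nu}(f)(z)=\nu(z)\sum_{k}\widehat{f}(k)\,\frac{\om_{2k+1}}{(\om\nu)_{2k+1}}\,z^k$ and the moment comparison $\om_{2k+1}/(\om\nu)_{2k+1}\asymp(k+1)^n$ (which is indeed valid for $\om\in\DDD$), you conclude $V_{\om,\nu}(f)(z)\asymp(1-|z|)^n f^{(n)}(z)$ ``up to controlled lower-order terms''. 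This step does not follow: two coefficient-multiplier sequences that are merely comparable in size cannot be exchanged inside a power series while keeping control of the supremum of the resulting function. What you would actually need is that the ratio sequence $\lambda_k=\om_{2k+1}\,(k+1)^{-n}/(\om\nu)_{2k+1}$, and its reciprocal, act as bounded coefficient multipliers of $\B$; boundedness of $\lambda_k$ alone is far from sufficient, and for a general $\om\in\DDD$ the moments have no smoothness beyond doubling, so such a multiplier theorem is not available and would be a substantial result in its own right. The lower-order terms you mention only account for the harmless difference between $(k+1)^n$ and falling factorials, not for this discrepancy.

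Note also that even the easier inequality $\|V_{\om,\nu}(f)\|_{L^\infty}\lesssim\|f\|_{\B}$ cannot be obtained by crude pointwise bounds: inserting $|f(\zeta)|\lesssim\|f\|_{\B}\log\frac{e}{1-|\zeta|}$ and the $L^1$ kernel estimate leaves an unbounded logarithmic factor, so some cancellation must be exploited. The paper obtains it by writing $f=P_\om(h)$ with $h\in L^\infty$ and $\|h\|_{L^\infty}\asymp\|f\|_{\B}$ (surjectivity of $P_\om:L^\infty\to\B$ for $\om\in\DDD$, \cite[Theorem~3]{PelRat2020}), then using Fubini, \cite[Theorem~1]{PR2016/1} and Lemma~\ref{room}; the reverse inequality is proved via the duality $(A^1_\om)^\star\simeq\B$ under the $A^2_\om$-pairing together with the dilation/maximum-modulus argument of Lemma~\ref{le:nuevo} to pass from $f_r$ to $f$. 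Your proposal contains no substitute for either of these arguments (nor does it note, for the converse implication of the theorem, that $f=\overline{h^\om_{\overline{f}}(1)}\in A^p_\om\subset L^1_{\om_{\log}}$ is needed before Theorem~\ref{Theorem:hankel-characterization} can be invoked), so as written it does not establish the key equivalence.
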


We also observe that the statement in Theorem~\ref{proposition:Bloch-necessary} fails for $0<p\le1$. In the case $p=1$ the characterizing condition is
	\begin{equation}\label{msufk}
  \sup_{z\in\D}\left|f'(z)\right|(1-|z|)\log\frac{e}{1-|z|}<\infty,
  \end{equation}
while for $0<p<1$ we have
	$$
	\|h^\om_{\overline{f}}\|_{A^p_\om\to\overline{A^p_\om}}
	\asymp\sup_{z\in\D}\frac{|f'(z)|(1-|z|)}{\left(\widehat{\om}(z)(1-|z|)\right)^{\frac{1}{p}-1}}
	$$
by \cite[Theorems~5 and 7]{DRWW2022}.

The remainder of the paper is organized as follows. The estimate \eqref{eq:kernelmix-general'} together with some preliminary results on weights are proved in Section~\ref{s2} while Section~\ref{s3} is devoted to the proofs of Theorems~\ref{Theorem:hankel-characterization} and \ref{proposition:Bloch-necessary}. At the end of the paper in Section~\ref{s4} we will use the results obtained to sketch a new proof of the main result of the recent study \cite{PRMZ2023} related to the boundedness of $P_\om:L^\infty\to\B$ when $\om\in\DDD$. 

We finish this introduction with couple of words on the notation already used. The letter $C=C(\cdot)$ will denote an absolute constant whose value depends on the parameters indicated in the parenthesis, and may change from one occurrence to another. If there exists a constant
$C=C(\cdot)>0$ such that $a\le Cb$, we will write either $a\lesssim b$ or $b\gtrsim a$. In particular, if $a\lesssim b\lesssim a$, then we write $a\asymp b$ and say that $a$ and $b$ are comparable.

\section{Norm estimates for modified Bergman kernels}\label{s2}

The aim of this section is to establish a sharp estimate for the norm of the functions $\zeta\mapsto(1-\overline{z}\zeta)^NB^\om_z(\zeta)$ in $A^p_\nu$. This norm estimate plays a crucial role in our arguments to prove the main results presented in the introduction. We begin with known characterizations of weights in $\DD$. The next result follows by \cite[Lemma~2.1]{PelSum14} and its proof. 

\begin{letterlemma}\label{Lemma:weights-in-D-hat}
Let $\om$ be a radial weight. Then the following statements are equivalent:
\begin{itemize}
\item[\rm(i)] $\om\in\DD$;
\item[\rm(ii)] There exist $C=C(\om)>0$ and $\b=\b(\om)>0$ such that
    \begin{equation*}
    \begin{split}
    \widehat{\om}(r)\le C\left(\frac{1-r}{1-t}\right)^{\b}\widehat{\om}(t),\quad 0\le r\le t<1;
    \end{split}
    \end{equation*}
\item[\rm(iii)] There exists $\lambda=\lambda(\om)\ge0$ such that
    $$
    \int_\D\frac{\om(z)}{|1-\overline{\z}z|^{\lambda+1}}\,dA(z)\asymp\frac{\widehat{\om}(\zeta)}{(1-|\z|)^\lambda},\quad \z\in\D;
    $$
\item[\rm(iv)] There exist $C=C(\om)>0$ and $\eta=\eta(\om)>0$ such that
    \begin{equation*}
    \begin{split}
    \om_x\le C\left(\frac{y}{x}\right)^{\eta}\om_y,\quad 0<x\le
    y<\infty.
    \end{split}
    \end{equation*}
\end{itemize}
\end{letterlemma}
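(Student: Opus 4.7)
The plan is to prove the cycle (i) $\Rightarrow$ (ii) $\Rightarrow$ (iii) $\Rightarrow$ (i), and then to establish (i) $\Leftrightarrow$ (iv) separately via a moment--tail comparison.

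First I would handle (i) $\Leftrightarrow$ (ii). The direction (ii) $\Rightarrow$ (i) is immediate by taking $t=(1+r)/2$. For the converse, given $0\le r\le t<1$, let $n$ be the smallest integer with $1-(1-r)/2^n\ge t$; iterating (i) yields $\widehat{\om}(r)\le C^n\widehat{\om}(t)$, and the bound $n\le 1+\log_2\frac{1-r}{1-t}$ produces (ii) with $\beta=\log_2 C$.

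Next, for (ii) $\Rightarrow$ (iii), using rotational invariance I may assume $\zeta=s\in[0,1)$. Passing to polar coordinates and applying the classical angular estimate
$$
\int_0^{2\pi}\frac{d\theta}{|1-sre^{i\theta}|^{\lambda+1}}\asymp\frac{1}{(1-sr)^\lambda}
$$
reduces the problem to controlling $\int_0^1 \om(r)(1-sr)^{-\lambda}\,dr$ by $\widehat{\om}(s)(1-s)^{-\lambda}$. The lower bound follows from restricting the integrand to $r\in[s,1]$, where $1-sr\le 2(1-s)$. For the upper bound I would integrate by parts via $\om(r)\,dr=-d\widehat{\om}(r)$ and invoke (ii) with $\lambda$ chosen strictly larger than $\beta$ to absorb the polynomial growth of $\widehat{\om}$ against the weight $(1-sr)^{-\lambda-1}$. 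For (iii) $\Rightarrow$ (i), the key observation is that the left-hand side of (iii) is rotation invariant and monotone increasing as a function of $|\zeta|$, since the angular average $\int_0^{2\pi}|1-ue^{i\theta}|^{-(\lambda+1)}d\theta$ is manifestly increasing on $[0,1)$. Evaluating (iii) at $\zeta$ with $|\zeta|=r$ and $|\zeta|=(1+r)/2$, the monotonicity of the integral side combined with the comparability on the tail-integral side yields $\widehat{\om}(r)\lesssim 2^\lambda\widehat{\om}((1+r)/2)$, which is exactly (i).

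Finally, (i) $\Leftrightarrow$ (iv) rests on the moment--tail correspondence $\om_x\asymp\widehat{\om}(1-1/x)$ for $x\ge 1$. The lower bound is trivial by restricting to $[1-1/x,1]$, where $r^x\gtrsim 1$. The matching upper bound for the remainder $\int_0^{1-1/x}r^x\om(r)\,dr$ follows from $r^x\le e^{-x(1-r)}$ together with (ii), after splitting dyadically. The substitution $x=(1-t)^{-1}$, $y=(1-r)^{-1}$ then turns the polynomial tail estimate in (ii) into the polynomial moment estimate in (iv), and conversely.

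The main obstacle I anticipate is the integration-by-parts step in (ii) $\Rightarrow$ (iii): one must take $\lambda$ strictly larger than the doubling exponent $\beta$ (otherwise the resulting integral after integration by parts will not be dominated by $\widehat{\om}(s)(1-s)^{-\lambda}$), and one must verify that the boundary contribution at $r=1$ vanishes, which uses $\widehat{\om}(1)=0$. The remaining pieces are routine bookkeeping with doubling inequalities and with the standard angular integral asymptotics.
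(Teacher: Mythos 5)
The paper does not actually prove this lemma (it is quoted from \cite[Lemma~2.1]{PelSum14} and its proof), so your attempt can only be compared with the standard arguments. Your cycle (i)$\Rightarrow$(ii)$\Rightarrow$(iii)$\Rightarrow$(i) is sound and is essentially the classical route: the iteration giving $\b=\log_2C$, the integration by parts with $\lambda>\b$ (your caveats about the vanishing boundary term and the need for extra room in the exponent are exactly the right bookkeeping), and the monotonicity argument for (iii)$\Rightarrow$(i), which indeed works because the angular average $\frac1{2\pi}\int_0^{2\pi}|1-ue^{i\t}|^{-(\lambda+1)}\,d\t=\sum_{n\ge0}c_n^2u^{2n}$ has nonnegative coefficients and is therefore increasing in $u\in[0,1)$. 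The direction (i)$\Rightarrow$(iv) via $\om_x\asymp\widehat{\om}\left(1-\frac1x\right)$ and the substitution $x=(1-t)^{-1}$, $y=(1-r)^{-1}$ is also fine (the range $0<x\le y\le1$ is routine).

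The gap is in (iv)$\Rightarrow$(i). Your moment--tail correspondence is proved, as you yourself say, ``together with (ii)'': the dyadic splitting controls $\int_{\{2^n/x\le 1-r\le 2^{n+1}/x\}}\om$ by $2^{(n+1)\b}\widehat{\om}\left(1-\frac1x\right)$ only because the tail estimate (ii) is already available. Consequently, when you claim the substitution converts (ii) into (iv) ``and conversely'', the converse is circular: assuming only (iv) you have just the trivial half $\om_x\gtrsim\widehat{\om}\left(1-\frac1x\right)$, which points the wrong way for deducing doubling of $\widehat\om$. What is missing is an argument producing $\om_y\lesssim\widehat{\om}(t)$ for $y\asymp(1-t)^{-1}$ out of (iv) alone. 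The standard fix is an absorption trick: for $0<t<1$ and $y>0$,
\begin{equation*}
\om_y\le\int_0^t r^y\om(r)\,dr+\widehat{\om}(t)\le t^{y/2}\om_{y/2}+\widehat{\om}(t)\le C2^{\eta}e^{-\frac{y(1-t)}{2}}\om_y+\widehat{\om}(t),
\end{equation*}
where the last step uses (iv) for the pair $\left(\frac{y}{2},y\right)$ and $\ln t\le-(1-t)$. Choosing $y=\frac{K}{1-t}$ with $K=K(C,\eta)$ so large that $C2^{\eta}e^{-K/2}\le\frac12$ gives $\om_{K/(1-t)}\le2\widehat{\om}(t)$. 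Combining this with $\widehat{\om}(r)\le r^{-x}\om_x\lesssim\om_{2K/(1-r)}$ for $x=\frac{2K}{1-r}$ and $r\ge\frac12$, and taking $t=\frac{1+r}{2}$, yields $\widehat{\om}(r)\lesssim\widehat{\om}\left(\frac{1+r}{2}\right)$, that is, $\om\in\DD$ (the range $r<\frac12$ is trivial since $\widehat\om\left(\frac34\right)>0$). Without some such absorption step your proof of (iv)$\Rightarrow$(i) does not close.
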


The following simple lemma is useful for our purposes. The second case implies that $\DDD$ is closed under multiplication by the hat of any weight in $\DD$. The third case reveals the same phenomenon when the multiplication is done by a suitably small negative power of the hat.  For each radial weight $\om$ and $\b\in\RR$, write $\om_{[\b]}(z)=\om(z)(1-|z|)^\b$ for all $z\in\D$.

\begin{lemma}\label{room}
Let $\nu$ be a radial weight. Then the following statements are equivalent: 
\begin{itemize}
\item[\rm(i)] $\nu\in\DD$;
\item[\rm(ii)] For some (equivalently for each) $\om\in\DDD$ we have $\omega\widehat{\nu}\in\DDD$ and $\widehat{\omega\widehat{\nu}}\asymp\widehat{\om}\widehat{\nu}$ on $[0,1)$;
\item[\rm(iii)] For some (equivalently for each) $\om\in\DDD$ there exists $\gamma_0=\gamma_0(\om,\nu)>0$ such that, for each $\g\in(0,\g_0]$, we have $(\widehat{\nu})^{-\gamma}\om\in\DDD$, and 
	\begin{equation}\label{chochi}
	\int_r^1\frac{\om(t)}{\widehat{\nu}(t)^{\gamma}}\,dt\asymp\frac{\widehat{\om}(r)}{\widehat{\nu}(r)^{\gamma}},\quad 0\le r<1.
	\end{equation}
\end{itemize}
\end{lemma}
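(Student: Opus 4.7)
The plan is to establish the chain $(\mathrm{i})\Rightarrow(\mathrm{ii})\Rightarrow(\mathrm{i})$ together with $(\mathrm{i})\Leftrightarrow(\mathrm{iii})$, driven by the polynomial two-sided envelopes for $\widehat{\omega}$ provided by Lemma~\ref{Lemma:weights-in-D-hat}(ii) and its well-known $\Dd$-analog stating that $\omega\in\Dd$ is equivalent to the existence of constants $C,\alpha>0$ with $\widehat{\omega}(t)\le C\left(\frac{1-t}{1-r}\right)^{\alpha}\widehat{\omega}(r)$ for $0\le r\le t<1$. The workhorse identity will be $\widehat{\omega\widehat{\nu}}(r)=\int_r^1\omega(t)\widehat{\nu}(t)\,dt$ and its analog with $\widehat{\nu}(t)^{-\gamma}$ in place of $\widehat{\nu}(t)$.

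For $(\mathrm{i})\Rightarrow(\mathrm{ii})$ with any fixed $\omega\in\DDD$ I would first prove $\widehat{\omega\widehat{\nu}}\asymp\widehat{\omega}\widehat{\nu}$. The inequality $\widehat{\omega\widehat{\nu}}(r)\le\widehat{\nu}(r)\widehat{\omega}(r)$ is immediate from the monotonicity of $\widehat{\nu}$. For the reverse, pick $K>1$ so large that iterating the $\Dd$ condition for $\omega$ produces $\int_r^{1-(1-r)/K}\omega(t)\,dt\gtrsim\widehat{\omega}(r)$, and then iterate the defining $\DD$ condition for $\nu$ to conclude $\widehat{\nu}(t)\gtrsim\widehat{\nu}(r)$ uniformly on $[r,1-(1-r)/K]$; multiplying yields the lower bound. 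The membership $\omega\widehat{\nu}\in\DDD$ then transfers termwise from $\omega\in\DDD$ and $\nu\in\DD$ through the asymptotic. The converse $(\mathrm{ii})\Rightarrow(\mathrm{i})$ is one line: $\omega\widehat{\nu}\in\DD$ combined with the asymptotic forces $\widehat{\omega}(r)\widehat{\nu}(r)\lesssim\widehat{\omega}((1+r)/2)\widehat{\nu}((1+r)/2)$, and dividing by $\widehat{\omega}(r)\ge\widehat{\omega}((1+r)/2)$ gives $\widehat{\nu}(r)\lesssim\widehat{\nu}((1+r)/2)$.

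For $(\mathrm{i})\Rightarrow(\mathrm{iii})$, the lower bound in \eqref{chochi} follows at once from monotonicity of $\widehat{\nu}$. For the upper bound, Lemma~\ref{Lemma:weights-in-D-hat}(ii) applied to $\nu$ gives $\widehat{\nu}(t)^{-\gamma}\lesssim\left(\frac{1-r}{1-t}\right)^{\gamma\beta}\widehat{\nu}(r)^{-\gamma}$ for $r\le t<1$, and a dyadic splitting of $[r,1)$ paired with the $\Dd$-decay of $\widehat{\omega}$ controls $\int_r^1\omega(t)(1-t)^{-\gamma\beta}\,dt\lesssim\widehat{\omega}(r)(1-r)^{-\gamma\beta}$ as long as $\gamma\beta<\alpha$. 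The membership $\widehat{\nu}^{-\gamma}\omega\in\DDD$ then reduces, via \eqref{chochi}, to comparisons of $\widehat{\omega}(r)/\widehat{\nu}(r)^{\gamma}$ with its value at $(1+r)/2$ (for $\DD$) and at $1-(1-r)/K$ (for $\Dd$); the former is free from $\omega\in\DD$ and the monotonicity of $\widehat{\nu}$, while the latter demands $\gamma$ small enough that the $K^{-\alpha}$ decay afforded by $\omega\in\Dd$ dominates the worst-case $K^{\beta\gamma}$ growth of $(\widehat{\nu}(r)/\widehat{\nu}(1-(1-r)/K))^\gamma$ permitted by $\nu\in\DD$, which again forces $\gamma<\alpha/\beta$. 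Conversely, for $(\mathrm{iii})\Rightarrow(\mathrm{i})$, the $\Dd$-decay applied to $\widehat{\nu}^{-\gamma}\omega$ between $r$ and $(1+r)/2$ yields $\widehat{\omega}((1+r)/2)/\widehat{\nu}((1+r)/2)^\gamma\lesssim\widehat{\omega}(r)/\widehat{\nu}(r)^\gamma$, which combined with $\widehat{\omega}(r)\lesssim\widehat{\omega}((1+r)/2)$ from $\omega\in\DD$ produces $\widehat{\nu}(r)^\gamma\lesssim\widehat{\nu}((1+r)/2)^\gamma$, hence $\nu\in\DD$.

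The main technical obstacle will be the quantitative choice of $\gamma_0$ in $(\mathrm{i})\Rightarrow(\mathrm{iii})$: the same threshold must simultaneously guarantee finiteness and comparability of the integral in \eqref{chochi} and ensure that $\widehat{\nu}^{-\gamma}\omega$ stays in $\Dd$ after the destabilizing multiplication by $\widehat{\nu}^{-\gamma}$. Both constraints boil down to $\gamma_0<\alpha/\beta$, where $\alpha$ is the $\Dd$-exponent of $\omega$ and $\beta$ the $\DD$-exponent of $\nu$, so taking $\gamma_0$ to be any positive constant strictly below this ratio closes the argument.
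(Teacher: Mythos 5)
Your argument is correct, and for (i)$\Leftrightarrow$(ii) and for (iii)$\Rightarrow$(i) it is essentially the paper's own proof: the trivial bound $\widehat{\om\widehat{\nu}}\le\widehat{\om}\widehat{\nu}$, the lower bound obtained by restricting the integral to $[r,1-(1-r)/K]$ and combining Lemma~\ref{Lemma:weights-in-D-hat}(ii) for $\nu$ with the $\Dd$-condition for $\om$, and the same one-line deductions of $\nu\in\DD$ from (ii) and from \eqref{chochi} (in the last step you invoke the $\Dd$-property of $\widehat{\nu}^{-\gamma}\om$, but monotonicity of its tail integral together with \eqref{chochi}, as in the paper, already suffices). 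The genuine divergence is in (i)$\Rightarrow$(iii): the paper settles this direction by citing \cite[Lemma~2]{PRMZ2023}, whereas you prove it directly, using the power-type envelope for $\widehat{\nu}$ from Lemma~\ref{Lemma:weights-in-D-hat}(ii), the power-type decay characterizing $\Dd$ (which the paper itself recalls later, in the proof of Proposition~\ref{pr:Blochdescription}), and a dyadic splitting of $[r,1)$ giving $\int_r^1\om(t)(1-t)^{-\gamma\beta}\,dt\lesssim\widehat{\om}(r)(1-r)^{-\gamma\beta}$ whenever $\gamma\beta<\alpha$; the same threshold $\gamma_0<\alpha/\beta$, with $K$ chosen large enough that $K^{\gamma\beta-\alpha}$ absorbs the comparability constants, also yields $\widehat{\nu}^{-\gamma}\om\in\Dd$, while the $\DD$-part costs nothing. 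Your route thus buys a self-contained proof with an explicit admissible range for $\gamma_0$, at the price of reproducing an argument the paper outsources; as in the paper, the transfer of $\DDD$-membership to $\om\widehat{\nu}$ in (i)$\Rightarrow$(ii) is left at the level of ``standard arguments'' (it requires iterating the $\Dd$-condition of $\om$, i.e.\ taking $K$ large, to beat the comparability constants), which is an acceptable level of detail here.
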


\begin{proof}
Assume first (i), and let $\om\in\DDD$. It is clear that $\widehat{\omega\widehat{\nu}}\le\widehat{\om}\widehat{\nu}$ on $[0,1)$. Let $\beta=\beta(\nu)>0$ be that of Lemma~\ref{Lemma:weights-in-D-hat}(ii), and observe that $\om\in\Dd$ if
	\begin{equation}\label{Eq:d-check}
	\widehat{\om}(r)\le C'\int_r^{1-\frac{1-r}{K}}\om(t)\,dt,\quad0\le r<1,
	\end{equation}
for some constants $C'=C'(\om)>1$ and $K=K(\om)>1$ by the definition. Then Lemma~\ref{Lemma:weights-in-D-hat}(ii) together with \eqref{Eq:d-check} yields
	\begin{equation*}
	\begin{split}
	\widehat{\om\widehat{\nu}}(r)
	&\ge\frac{\widehat{\nu}(r)}{C(1-r)^\beta}\int_r^1\omega_{[\beta]}(t)\,dt
	\ge\frac{\widehat{\nu}(r)}{C(1-r)^\beta}\int_{r}^{1-\frac{1-r}{K}}\omega_{[\beta]}(t)\,dt\\
	&\ge\frac{\widehat{\nu}(r)}{C(1-r)^\beta}\frac{(1-r)^\beta}{K^\beta}\int_{r}^{1-\frac{1-r}{K}}\omega(t)\,dt
	\ge\frac{\widehat{\nu}(r)\widehat{\om}(r)}{CC'K^\beta}, \quad 0\le r<1,
	\end{split}
	\end{equation*}
and thus $\widehat{\omega\widehat{\nu}}\asymp\widehat{\om}\widehat{\nu}$ on $[0,1)$. Standard arguments involving this asymptotic equality, the definition of $\DD$ and \eqref{Eq:d-check} show that $\om\widehat{\nu}\in\DDD$. Thus (ii) is satisfied.

Assume next (ii). Then $\om\widehat\nu\in\DD$ implies 
	\begin{equation*}
	\begin{split}
	\widehat{\om}(r)\widehat{\nu}(r)
	&\lesssim\widehat{\om\widehat{\nu}}(r)
	\lesssim\widehat{\om\widehat{\nu}}\left(\frac{1+r}{2}\right)
	\le\widehat{\nu}\left(\frac{1+r}{2}\right)\widehat\om(r),\quad 0\le r<1,
	\end{split}
	\end{equation*}
and hence $\nu\in\DD$. Thus (i) and (ii) are equivalent.

The fact that (i) implies (iii) follows from \cite[Lemma~2]{PRMZ2023}. Conversely, if (iii) is satisfied, then
	\begin{equation*}
	\begin{split}
	\frac{\widehat{\om}(r)}{\widehat{\nu}(r)^\gamma}
	&\gtrsim\int_{\frac{1+r}{2}}^1\frac{\om(t)}{\widehat{\nu}(t)^\gamma}\,dt
	\ge\frac{\widehat{\om}\left(\frac{1+r}{2}\right)}{\widehat{\nu}\left(\frac{1+r}{2}\right)^\gamma},\quad 0\le r<1,
	\end{split}
	\end{equation*}
and since $\om\in\DD$ and $\gamma>0$, we deduce $\nu\in\DD$.
\end{proof}

The following result of technical nature can be established by arguments similar to those used to prove \cite[(22)]{PR2016/1}. We leave the details of the proof for an interested reader.

\begin{lemma}\label{le:HLextended}
Let $\omega\in\DD$, $0<p<\infty$ and $\alpha\in\mathbb{R}$. Then
	$$
	\sum_{n=0}^\infty \frac{(n+1)^{\alpha-2}}{\omega^p_{2n+1}}s^n
	\asymp\int_0^s \frac{dt}{\widehat{\omega}(t)^p(1-t)^\alpha}+1,\quad 0<s<1.	
	$$
\end{lemma}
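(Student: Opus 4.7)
The plan is to approximate the sum by the integral blockwise. For $\om\in\DD$, a standard consequence of Lemma~\ref{Lemma:weights-in-D-hat} is the moment-tail comparison $\om_{2n+1}\asymp\widehat{\om}(r_n)$, where $r_n=1-1/(n+1)$. Writing $I_n=[r_n,r_{n+1})$ so that $|I_n|\asymp(n+1)^{-2}$, the $\DD$-property makes $\widehat{\om}$ and $(1-t)^{-1}$ essentially constant on $I_n$, comparable to $\om_{2n+1}$ and $n+1$ respectively. This yields the block identity
$$
\int_{I_n}\frac{dt}{\widehat{\om}(t)^p(1-t)^\alpha}\asymp\frac{(n+1)^{\alpha-2}}{\om_{2n+1}^p},\quad n\ge 0,
$$
which pairs summand with integrand and is the starting point of both inequalities in the statement.

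Fix $s\in(0,1)$ and let $N=N(s)$ be the unique integer with $s\in I_N$, so that $N+1\asymp 1/(1-s)$. I would first handle the head $0\le n\le N$, where $s^n\ge s^N\gtrsim 1$ forces $s^n\asymp 1$ uniformly. Combined with the block identity, summation over this range gives
$$
\sum_{n=0}^{N}\frac{(n+1)^{\alpha-2}}{\om_{2n+1}^p}s^n\asymp\int_0^{r_{N+1}}\frac{dt}{\widehat{\om}(t)^p(1-t)^\alpha}\ge\int_0^s\frac{dt}{\widehat{\om}(t)^p(1-t)^\alpha}.
$$
Since the series on the left in the statement also exceeds its $n=0$ term $1/\om_1^p\asymp 1$, this already secures the lower bound.

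The main obstacle is the tail $\sum_{n>N}\frac{(n+1)^{\alpha-2}}{\om_{2n+1}^p}s^n$. Geometric decay $s^n=s^N s^{n-N}\lesssim e^{-(n-N)/(N+1)}$ supplies smallness past $n=N$, but must be balanced against a possibly mild polynomial blow-up of $\om_{2n+1}^{-p}$. Lemma~\ref{Lemma:weights-in-D-hat}(iv) provides $\om_{2n+1}^{-p}\lesssim((n+1)/(N+1))^{p\eta}\om_{2N+1}^{-p}$ for $n\ge N$, so a Riemann-sum comparison via the substitution $k=(N+1)y$ and the elementary bound $\int_0^\infty(1+y)^{\alpha-2+p\eta}e^{-y}\,dy<\infty$ reduce the tail to $(N+1)^{\alpha-1}/\om_{2N+1}^p$. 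Invoking the block identity together with the doubling of $\widehat{\om}$ to collect the comparable terms $\om_{2n+1}\asymp\om_{2N+1}$ on $\lfloor N/2\rfloor\le n\le N-1$, one finds
$$
\frac{(N+1)^{\alpha-1}}{\om_{2N+1}^p}\asymp\int_{r_{\lfloor N/2\rfloor}}^{r_N}\frac{dt}{\widehat{\om}(t)^p(1-t)^\alpha}\le\int_0^s\frac{dt}{\widehat{\om}(t)^p(1-t)^\alpha}
$$
for $N\ge 2$; the remaining cases $N\in\{0,1\}$ correspond to $s$ bounded away from $1$, where both sides are $\asymp 1$ and subsumed by the additive constant. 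Combining head and tail completes the upper estimate.
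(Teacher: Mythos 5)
Your proof is correct and is essentially the argument the paper intends: the paper itself omits the proof, referring to the proof of (22) in \cite{PR2016/1}, which runs exactly along your lines --- the comparison $\om_x\asymp\widehat{\om}(1-1/x)$ for $\om\in\DD$, blockwise identification of summand and integrand on $[1-\frac{1}{n+1},1-\frac{1}{n+2})$, splitting of the series at $N\asymp(1-s)^{-1}$, and Lemma~\ref{Lemma:weights-in-D-hat}(iv) together with doubling to control the tail. Two harmless points to tidy up: the geometric bound should read $s^{n-N}\le e^{-(n-N)/(N+2)}$ rather than $e^{-(n-N)/(N+1)}$ (any fixed multiple of $(N+1)^{-1}$ in the exponent suffices for the Riemann-sum step), and in the upper estimate the head also needs $\int_0^{r_{N+1}}\lesssim\int_0^s+1$, which your displayed tail bound already supplies since $\int_{r_N}^{r_{N+1}}\asymp(N+1)^{\alpha-2}\om_{2N+1}^{-p}\le(N+1)^{\alpha-1}\om_{2N+1}^{-p}\lesssim\int_0^s$ for $N\ge2$.
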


In the forthcoming arguments we will face weights which are of the same nature as $\om_{[\beta]}$ but slightly different. Since we have to keep track with the details here carefully, we introduce suitable notation for these creatures. For this purpose, we denote
	$$ 
	\left( \omega_{(n_1,y_1),(n_2,y_2),\dots,(n_k,y_k)}\right)_x
	=\int_{0}^1 r^x\left(\prod_{j=1}^k(1-r^{y_j})^{n_j}\right)\om(r)\,dr,\quad 0\le x<\infty,
	$$
for $k,n_j,y_j\in\N$ and $j\in\{1,\dots,k\}$. It follows from the inequality $1-r^a\le a(1-r)$, valid for all $1\le a<\infty$ and $0\le r\le1$, and \cite[(1.3)]{PelRat2020} that there exists a constant $C=C\left(\omega, \sum_{j=1}^k n_j\right)>0$ such that
	\begin{equation}
	\begin{split}\label{eq:momentogeneral}
	\left(\omega_{(n_1,y_1),(n_2,y_2),\dots,(n_k,y_k)}\right)_x
	&\le\left(\prod_{j=1}^k y_j^{n_j}\right)\left( \omega_{\left[\sum_{j=1}^k n_j \right]}\right)_x\\
	&\le C\left(\prod_{j=1}^k y_j^{n_j}\right)\frac{\omega_x}{x^{\sum_{j=1}^k n_j}}, \quad 0\le x<\infty.
	\end{split}
	\end{equation}

The kernel estimate that we are after reads as follows.

\begin{lemma}\label{kernelmix-general}
Let $2\le p<\infty$, $N\in\N$ and $\om\in\DD$, and let $\nu$ be a radial weight. Then there exists a constant $C=C(\om,\nu,p,N)>0$ such that
    \begin{equation}\label{eq:kernelmix-general}
    \int_\D|(1-\overline{z}\z)^NB^\om_z(\z)|^p\nu(\z)\,dA(\z)
		\le C\left(\int_0^{|z|}\frac{\widehat{\nu}(t)}{\widehat{\om}(t)^p(1-t)^{p(1-N)}}\,dt+1\right),\quad z\in\D.
    \end{equation}
Moreover, if $\nu\in\DDD$, then
		\begin{equation}\label{eq:kernelmix-generalX}
    \int_\D|(1-\overline{z}\z)^NB^\om_z(\z)|^p\nu(\z)\,dA(\z)
		\asymp\left(\int_0^{|z|}\frac{\widehat{\nu}(t)}{\widehat{\om}(t)^p(1-t)^{p(1-N)}}\,dt+1\right),\quad z\in\D.
    \end{equation}
\end{lemma}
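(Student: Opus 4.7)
The plan is to extend the strategy used in \cite[Theorem~1]{PR2016/1} for the case $N=0$, by exploiting the Taylor cancellation produced by the factor $(1-\overline z\z)^N$. Using $B^\om_z(\z)=\sum_{k\ge0}(\overline z\z)^k/(2\om_{2k+1})$ together with the binomial expansion, I would first write
\begin{equation*}
(1-\overline z\z)^NB^\om_z(\z)=\sum_{k=0}^\infty\gamma_k(\overline z\z)^k,\qquad \gamma_k=\frac{1}{2}\sum_{j=0}^{\min(k,N)}\binom{N}{j}(-1)^j\frac{1}{\om_{2(k-j)+1}}.
\end{equation*}
For $p=2$, the orthogonality of the monomials $\{\z^k\}$ in $L^2(\nu\,dA)$ gives immediately $\|(1-\overline z\z)^NB^\om_z\|_{A^2_\nu}^2=2\sum_{k\ge0}|\gamma_k|^2|z|^{2k}\nu_{2k+1}$. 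The proof then reduces to estimating $|\gamma_k|$ sharply and to converting the resulting series into the integral on the right of \eqref{eq:kernelmix-general}.

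The key coefficient estimate is $|\gamma_k|\lesssim\frac{1}{(k+1)^N\om_{2k+1}}$ for $k\ge 1$. To prove it, I would clear denominators so that the common denominator $\prod_{j=0}^{N}\om_{2(k-j)+1}$ is comparable to $\om_{2k+1}^{N+1}$ by Lemma~\ref{Lemma:weights-in-D-hat}(iv) (applied $N$ times to the consecutive comparable indices $2(k-j)+1$, valid whenever $k\ge N$). The remaining alternating sum in the numerator is handled by iterating the telescoping identity
\begin{equation*}
\om_{2(k-j)+1}-\om_{2(k-j-1)+1}=-\int_0^1r^{2(k-j-1)+1}(1-r^2)\om(r)\,dr=-\left(\om_{(1,2)}\right)_{2(k-j-1)+1}.
\end{equation*}
After $N$ such iterations the entire numerator is bounded, up to factors of $\om_{2k+1}^{N}$, by a single moment of the form $\left(\om_{(N,2)}\right)_{2(k-N)+1}$, and \eqref{eq:momentogeneral} with $n_1=N$, $y_1=2$ yields $\left(\om_{(N,2)}\right)_{2(k-N)+1}\lesssim\om_{2k+1}/(k+1)^N$, which gives the claim after dividing by the common denominator.

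For $p>2$, the $p=2$ identity is no longer available, but the same series majorant can be reached by writing $|(1-\overline z\z)^NB^\om_z|^p=|(1-\overline z\z)^NB^\om_z|^{p-2}\cdot|(1-\overline z\z)^NB^\om_z|^2$, controlling the first factor pointwise through $|(1-\overline z\z)^NB^\om_z(\z)|\le\sum_k|\gamma_k|(|z||\z|)^k$ and Lemma~\ref{le:HLextended} applied with $p=1$ to $\sum_k|\gamma_k|s^k\lesssim\sum_ks^k/((k+1)^N\om_{2k+1})$, while the second factor is computed exactly by $p=2$ orthogonality. Substituting the coefficient bound, using $\nu_{pk+1}=(pk+1)\int_0^1r^{pk}\widehat\nu(r)\,dr$, interchanging sum and integral, and invoking Lemma~\ref{le:HLextended} once more with $\alpha=p(1-N)+1$ transforms the inner series into $\int_0^{(r|z|)^p}\widehat\om(t)^{-p}(1-t)^{-(p(1-N)+1)}\,dt+1$; Fubini together with the elementary estimate $\int_{u/|z|}^1\widehat\nu(r)\,dr\lesssim(1-u)\widehat\nu(u)$ and the change of variables $t=u^p$ then produces the right-hand side of \eqref{eq:kernelmix-general} after absorbing constants via $\widehat\om(u^p)\asymp\widehat\om(u)$ and $(1-u^p)\asymp(1-u)$ (both valid since $\om\in\DD$).

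For the two-sided estimate \eqref{eq:kernelmix-generalX} under the stronger hypothesis $\nu\in\DDD$, each step of the argument admits a matching lower bound: the coefficient estimate of the second paragraph is sharp since the integral representation of the numerator is a sum of nonnegative contributions, and the passage from $A^p_\nu$ norm to coefficient sum reverses for weights in $\DD$. The principal obstacle will be the coefficient bound of the second paragraph: although it morally encodes the smoothness of $x\mapsto 1/\om_x$, making the iterated telescoping rigorous requires careful combinatorial bookkeeping and relies critically on Lemma~\ref{Lemma:weights-in-D-hat}(iv) to keep the denominators $\om_{2(k-j)+1}$ comparable to $\om_{2k+1}$ uniformly in $0\le j\le N$.
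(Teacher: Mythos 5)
Your coefficient analysis is essentially the paper's: the bound $|\gamma_k|\lesssim (k+1)^{-N}\om_{2k+1}^{-1}$ is exactly the estimate the paper obtains (by induction) for the coefficients of $(1-\overline z\z)^NB^\om_z(\z)$, and the $p=2$ case of your argument (Parseval in $A^2_\nu$, then Lemma~\ref{le:HLextended} and Fubini) is sound; one small caveat is that after the iterated telescoping the numerator is not a single moment $(\om_{(N,2)})_{2(k-N)+1}$ but a bounded-in-$N$ sum of products of generalized moments (for $N=2$ it is $(\om_{(1,4)})_{2k-3}(\om_{(1,2)})_{2k-3}-\om_{2k-3}(\om_{(2,2)})_{2k-3}$), each of which is controlled via \eqref{eq:momentogeneral} and Lemma~\ref{Lemma:weights-in-D-hat}(iv), so this is only a bookkeeping issue. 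The genuine gap is the passage to $2<p<\infty$. The splitting $|F|^p=|F|^{p-2}|F|^2$ with the first factor bounded by $\bigl(\sum_k|\gamma_k|s^k\bigr)^{p-2}$ and the second computed by Parseval does \emph{not} recover ``the same series majorant'': it yields $\bigl(\sum_k|\gamma_k|s^k\bigr)^{p-2}\sum_k|\gamma_k|^2s^{2k}$, which is in general not dominated by the Hardy--Littlewood sum $\sum_k(k+1)^{p-2}|\gamma_k|^ps^{pk}$ that your subsequent manipulation with $\nu_{pk+1}$ and Lemma~\ref{le:HLextended} (with $\alpha=p(1-N)+1$) presupposes. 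Concretely, take $N=1$, $p=3$ and $\om\in\DD$ with $\widehat\om(t)=\bigl(\log\frac{e}{1-t}\bigr)^{-2}$, so that $\om_{2k+1}\asymp(\log(k+2))^{-2}$ and your coefficient bound is $b_k\asymp(\log(k+2))^2/(k+1)$. Then $\sum_k b_ks^k\asymp\bigl(\log\frac{e}{1-s}\bigr)^3$, while $\sum_k b_k^2s^{2k}$ and $\sum_k(k+1)^{p-2}b_k^ps^{pk}$ remain bounded. Choosing $\nu$ with $\widehat\nu(t)=\bigl(\log\frac{e}{1-t}\bigr)^{-3}$, the right-hand side of \eqref{eq:kernelmix-general} is $\int_0^{|z|}\bigl(\log\frac{e}{1-t}\bigr)^{3}dt+1\asymp1$, whereas your majorant is at least of order $\int_{1/2}^{|z|}\bigl(\log\frac{e}{1-r}\bigr)^{3}\nu(r)\,dr\asymp\log\log\frac{e}{1-|z|}\to\infty$. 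So the interpolation bound $M_p^p\le M_\infty^{p-2}M_2^2$ is strictly too lossy; you genuinely need the Hardy--Littlewood inequality for $2\le p<\infty$ (\cite[Theorem~6.3]{D}), which is what the paper invokes and is the very reason for the hypothesis $p\ge2$ in the lemma.

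The route you sketch for the two-sided estimate \eqref{eq:kernelmix-generalX} also does not work as described. First, no lower bound $|\gamma_k|\gtrsim(k+1)^{-N}\om_{2k+1}^{-1}$ follows from your computation: for $N\ge2$ the numerator is a \emph{difference} of nonnegative moment products of comparable size (the paper only observes that it does not exploit cancellation, not that none can occur), and indeed for the weight above with $N=1$ one already has $\gamma_k\asymp(\log k)/k$, strictly smaller than the bound $b_k$. Second, ``the passage from $A^p_\nu$ norm to coefficient sum reverses'' would amount to the converse Hardy--Littlewood inequality for $p>2$, which is false in general. The paper's lower bound is much simpler and bypasses coefficients entirely: from $|1-\overline z\z|\ge 1-|\z|$ the left-hand side dominates $\int_\D|B^\om_z(\z)|^p(1-|\z|)^{Np}\nu(\z)\,dA(\z)$, and since $\nu\in\DDD$ gives $\nu_{[Np]}\in\DDD$ with $\widehat{\nu_{[Np]}}(t)\asymp\widehat\nu(t)(1-t)^{Np}$ (Lemma~\ref{room}(ii)), the two-sided $N=0$ estimate of \cite[Theorem~1(ii)]{PR2016/1} applied to $\nu_{[Np]}$ yields exactly the right-hand side of \eqref{eq:kernelmix-generalX}. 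You should replace your sharpness argument by this (or an equivalent) reduction.
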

		
\begin{proof}
First we will deal with the case $N=1$. A direct calculation shows that
    \begin{equation*}
    \begin{split}
    2(1-\overline{z}\z)B^\om_z(\z)
    &=(1-\overline{z}\z)\sum_{n=0}^\infty\frac{(\overline{z}\z)^n}{\om_{2n+1}}
    =\sum_{n=0}^\infty\frac{(\overline{z}\z)^n}{\om_{2n+1}}
		-\sum_{n=0}^\infty\frac{(\overline{z}\z)^{n+1}}{\om_{2n+1}}\\
    &=\frac1{\om_1}+\sum_{n=1}^\infty\left(\frac{\om_{2n-1}-\om_{2n+1}}{\om_{2n+1}\om_{2n-1}}\right)(\overline{z}\z)^n,
    \end{split}
    \end{equation*}
where
    \begin{equation}\label{eq:difmoments}
    \om_{2n-1}-\om_{2n+1}=\int_0^1r^{2n-1}\om(r)(1-r^2)\,dr=(\om_{(1,2)})_{2n-1},\quad n\in\N.
    \end{equation}
Thus
    \begin{equation}\label{eq:n=1}
    \begin{split}
    2(1-\overline{z}\z)B^\om_z(\z)
		&=\frac1{\om_1}+\sum_{n=1}^\infty\left(\frac{(\om_{(1,2)})_{2n-1}}{\om_{2n+1}\om_{2n-1}}\right)(\overline{z}\z)^n\\
		&=A_1(\omega)+\sum_{n=1}^\infty\left(\frac{(\om_{(1,2)})_{2n-1}}{\om_{2n+1}\om_{2n-1}}\right)(\overline{z}\z)^n ,\quad z,\z\in\D.
    \end{split}
    \end{equation}
By the Hardy-Littlewood inequality \cite[Theorem~6.3]{D} and \eqref{eq:momentogeneral}, we deduce
    \begin{equation*}\begin{split}
    \int_0^{2\pi}|(1-\overline{z}re^{i\t})B^\om_z(re^{i\t})|^p\,d\t
    & \lesssim A^p_1(\omega)+\sum_{n=1}^\infty\left(\frac{(\om_{(1,2)})_{2n-1}}{\om_{2n+1}\om_{2n-1}}\right)^p
		n^{p-2}|\z|^{np}|z|^{np}\\
		&\lesssim A^p_1(\omega)+\sum_{n=1}^\infty\left(\frac{1}{\om_{2n+1}}\right)^pn^{-2}r^{np}|z|^{np},
		\quad z\in \D,\quad 0\le r<1,
    \end{split}
    \end{equation*}
where the sum on the right-hand side corresponds to the case $\alpha=0$ of Lemma~\ref{le:HLextended}. Therefore we obtain
    $$
    \int_0^{2\pi}|(1-\overline{z}re^{i\t})B^\om_z(re^{i\t})|^p\,d\t
    \lesssim A^p_1(\omega)+\int_0^{|z|r}\frac{dt}{\widehat{\om}(t)^p},\quad z\in\D,\quad0\le r<1,
    $$
and it follows that
    \begin{equation}\label{pilipalipuli}
    \begin{split}
    \int_\D|(1-\overline{z}\z)B^\om_z(\z)|^p\nu(\z)\,dA(\z)
    &\lesssim\int_0^1\left(\int_0^{|z|r}\frac{dt}{\widehat{\om}(t)^p}\right)\nu(r)\,dr+1\\
		&=\int_0^{|z|}\frac{1}{\widehat{\om}(t)^p}\left(\int_{t/|z|}^1\nu(r)\,dr\right)\,dt+1\\
    &\le\int_0^{|z|}\frac{\widehat{\nu}(t)}{\widehat{\om}(t)^p}\,dt+1,\quad z\in\D.
    \end{split}
    \end{equation}
Hence the case $N=1$ is proved.

Next we could proceed by induction, but before doing that we discuss the case $N=2$ for the sake of clarity. By using \eqref{eq:n=1}, we obtain
		\begin{equation*}
    \begin{split}
		2(1-\overline{z}\z)^2B^\om_z(\z)
		&=(1-\overline{z}\z)\left(A_1(\omega)+\sum_{n=1}^\infty\frac{(\om_{(1,2)})_{2n-1} (\overline{z}\z)^n}{\om_{2n+1}\om_{2n-1}}\right)\\
		&=A_2(\omega,\overline{z}\z)+\sum_{n=2}^\infty
		\left(\frac{(\om_{(1,2)})_{2n-1}}{\om_{2n+1}\om_{2n-1}}-\frac{(\om_{(1,2)})_{2n-3}}{\om_{2n-1}\om_{2n-3}}\right)(\overline{z}\z)^n,\quad z,\z\in\D,
    \end{split}
    \end{equation*}
where  
	$$
	A_2(\omega,\overline{z}\z)=\frac{1-\overline{z}\z}{\omega_1}+\frac{(\om_{(1,2)})_{1}}{\omega_3\omega_1}\overline{z}\z,\quad z,\z\in\D.
	$$
Obviously, $|A_2(\omega,\overline{z}\z)|\le A_2(\omega)<\infty$ for all $z,\z\in\D$, and therefore it is a harmless term. Next, a reasoning similar to that in \eqref{eq:difmoments} gives
	\begin{equation}
	\begin{split}\label{eq:difmoments2}
	\left(\frac{(\om_{(1,2)})_{2n-1}}{\om_{2n+1}\om_{2n-1}}-\frac{(\om_{(1,2)})_{2n-3}}{\om_{2n-1}\om_{2n-3}}\right)
	&=\frac{1}{\om_{2n-1}}\left(\frac{(\om_{(1,2)})_{2n-1}}{\om_{2n+1}}-\frac{(\om_{(1,2)})_{2n-3}}{\om_{2n-3}}\right)\\
	&=\frac{1}{\om_{2n-1}}\bigg(\frac{(\om_{(1,2)})_{2n-1}-(\om_{(1,2)})_{2n-3}}{\om_{2n+1}}\\
	&\quad+(\om_{(1,2)})_{2n-3} \left(\frac{1}{\om_{2n+1}}-\frac{1}{\om_{2n-3}}\right)\bigg)\\
	&=\frac{1}{\om_{2n-1}}\left(-\frac{(\om_{(2,2)})_{2n-3}}{\om_{2n+1}}+(\om_{(1,2)})_{2n-3}
	\frac{\om_{2n-3}-\om_{2n+1}}{\om_{2n+1}\om_{2n-3}}\right)\\
	&=\frac{1}{\om_{2n-1}}\left(-\frac{(\om_{(2,2)})_{2n-3}}{\om_{2n+1}}
	+\frac{(\om_{(1,2)})_{2n-3}(\om_{(1,4)})_{2n-3}}{\om_{2n+1}\om_{2n-3}}\right)
	\end{split}
	\end{equation}    
for all $n\ge2$. Therefore
	\begin{equation}\label{eq:n=2}
  \begin{split}
	2(1-\overline{z}\z)^2B^\om_z(\z)
	=A_2(\omega,\overline{z}\z)-I(\omega,\overline{z}\z)+II(\omega,\overline{z}\z),\quad z,\z\in\D,
    \end{split}
    \end{equation}
where 
	$$
	I(\omega,\overline{z}\z)
	=\sum_{n=2}^\infty \frac{(\om_{(2,2)})_{2n-3}}{\om_{2n+1}\om_{2n-1}}(\overline{z}\z)^n,\quad z,\z\in\D,
	$$
and 
	$$
	II(\omega,\overline{z}\z)
	=\sum_{n=2}^\infty \frac{(\om_{(1,2)})_{2n-3}(\om_{(1,4)})_{2n-3}}{\om_{2n+1}\om_{2n-1}\om_{2n-3}}(\overline{z}\z)^n,
   \quad z,\z\in\D.
	$$
Next, by \eqref{eq:momentogeneral} and Lemma~\ref{Lemma:weights-in-D-hat}, we deduce
	\begin{equation*}
  \begin{split} 
	\frac{(\om_{(2,2)})_{2n-3}}{\om_{2n+1}\om_{2n-1}}
	\le C(\om,2)\frac{\om_{2n-3}}{(2n-3)^2\om_{2n+1}\om_{2n-1}}
	\lesssim\frac{1}{n^2\om_{2n+1}},\quad n\ge 2,
  \end{split}
  \end{equation*}
and 
  \begin{equation*}
	\begin{split} 
	\frac{(\om_{(1,2)})_{2n-3}(\om_{(1,4)})_{2n-3}}{\om_{2n+1}\om_{2n-1}\om_{2n-3}}
	\le C(\om,2)\frac{\omega_{2n-3}}{(2n-3)^2\om_{2n+1}\om_{2n-1}}
	\lesssim\frac{1}{n^2\om_{2n+1}},\quad n\ge 2.
  \end{split}
  \end{equation*}
Observe that even if $I$ and $II$ obey the same upper estimate, there is no significant cancellation in the difference $I-II$, and therefore we may consider them separately.

The Hardy-Littlewood inequality \cite[Theorem~6.3]{D} now yields
    \begin{equation*}
		\begin{split}
    \int_0^{2\pi}|(1-\overline{z}re^{i\t})^2B^\om_z(re^{i\t})|^p\,d\t
		\lesssim A^p_2(\omega)+\sum_{n=1}^\infty\left(\frac{1}{n\om_{2n+1}}\right)^pn^{-2}r^{np}|z|^{np},\quad z\in \D,\quad 0\le r<1,
    \end{split}
    \end{equation*}
where the sum on the right-hand side corresponds to the case $\alpha=-p$ of
Lemma~\ref{le:HLextended}. Therefore
    $$
    \int_0^{2\pi}|(1-\overline{z}re^{i\t})^2B^\om_z(re^{i\t})|^p\,d\t
    \lesssim A^p_2(\omega)+\int_0^{|z|r}\frac{dt}{\widehat{\om}(t)^p(1-t)^{-p}},\quad z\in \D,\quad 0\le r<1.
    $$
By arguing as in \eqref{pilipalipuli}, we obtain \eqref{eq:kernelmix-general} for $N=2$.    

Let us now proceed by induction. Let $N\in\N\setminus\{1\}$, and assume that there exist $M=M(N)\in\N$ and $L=L(N)\in\N$ such that
	\begin{equation}\label{eq:n=N}
  \begin{split}
	& 2(1-\overline{z}\z)^NB^\om_z(\z)
	=A_N(\omega,\overline{z}\z)+\sum_{j=1}^{M}\delta_jB^\omega_{N,\ell_j,z}(\zeta) \quad z,\z\in\D,
	\end{split}
  \end{equation} 
where $\delta_j\in\{\pm1\}$, $\sup_{z,\z\in\D}|A_N(\omega,\overline{z}\z)|\le A_N(\omega)<\infty$, and 
    \begin{equation*}
    \begin{split}
    B^\omega_{N,\ell_j,z}(\zeta)
		=\sum_{k=N}^\infty
		\frac{\prod_{s\in\{m_1,\dots,m_{\ell_j}\}}\left(\omega_{(n^s_1,y^s_1),(n^s_2,y^s_2),\dots,(n^s_k,y^s_k)}\right)_{2k+1-2s}}
		{\prod_{m=0}^{\ell_j}\om_{2k+1-2m}}(\overline{z}\z)^k,\quad z,\z\in\D,
    \end{split}
    \end{equation*}   
for all $j\in\{1,\dots,M\}$, and
  \begin{equation}
	\begin{split}\label{LN}
	\max_{j=1,\dots,M}\ell_j\le N+1, \quad 
	\max_{s\in\{m_1,\dots,m_{\ell_j}\},\,j=1\dots,M}y^{s}_j\le L,\quad\text{and}\quad
  \sum_{j=1}^N\sum_{s\in\{m_1,\dots,m_{\ell_j}\}}n^{s}_j=N.
  \end{split}
	\end{equation}
 
We observe first that
		\begin{equation}\label{eq:N+1s1}
    \begin{split}
		(1-\overline{z}\z)B^\omega_{N,\ell,z}(\zeta)
		=A^{\ell}_{N+1}(\omega,\overline{z}\z)+\sum_{k=N+1}^\infty \widehat{B^\omega_{N+1,\ell}}(k)(\overline{z}\z)^k,\quad z,\z\in\D,
    \end{split}
    \end{equation}
where $\ell=\ell_j$ with $j\in\{1,\dots,M\}$, $\sup_{z,\z\in\D}|A^{\ell}_{N+1}(\omega,\overline{z}\z)|\le A^{\ell}_N(\omega)<\infty$ and    
		\begin{equation*}\label{eq:N+1s11}
		\begin{split}
    \widehat{B^\omega_{N+1,\ell}}(k)
		&=\frac{\prod_{s\in\{m_1,\dots,m_{\ell}\}}\left(\omega_{(n^s_1,y^s_1),(n^s_2,y^s_2),\dots,(n^s_k,y^s_k)}\right)_{2k+1-2s}}
		{\prod_{m=0}^{\ell}\om_{2k+1-2m}}\\
		&\quad-\frac{\prod_{s\in\{m_1,\dots,m_{\ell}\}}\left(\omega_{(n^s_1,y^s_1),(n^s_2,y^s_2),\dots,(n^s_k,y^s_k)}\right)_{2k-1-2s}}
		{\prod_{m=0}^{\ell}\om_{2k-1-2m}}. 
    \end{split}
		\end{equation*}
Next, by generalizing the proof of \eqref{eq:difmoments2}, we obtain, for each $k\ge N+1$, the identity
		\begin{equation}\label{eq:N+1s2}
    \begin{split}
		\widehat{B^\omega_{N+1,\ell}}(k)\prod_{m=1}^{\ell}\om_{2k+1-2m}
		&=\frac{\prod_{s\in\{m_1,\dots,m_{\ell}\}}\left(\omega_{(n^s_1,y^s_1),(n^s_2,y^s_2),\dots,(n^s_k,y^s_k)}\right)_{2k+1-2s}}{ \om_{2k+1}}\\
		&\quad-\frac{\prod_{s\in\{m_1,\dots, m_{\ell}\}}\left(\omega_{(n^s_1,y^s_1),(n^s_2,y^s_2),\dots,(n^s_k,y^s_k)}\right)_{2k-1-2s}}
		{\om_{2k-1-2\ell}}\\
		&=\frac{1}{\om_{2k+1}}\bigg(\prod_{s\in\{m_1,\dots, m_{\ell}\}}\left(\omega_{(n^s_1,y^s_1),(n^s_2,y^s_2),\dots,(n^s_k,y^s_k)}\right)_{2k+1-2s}\\
		&\quad-\prod_{s\in\{m_1,\dots, m_{\ell}\}}\left(\omega_{(n^s_1,y^s_1),(n^s_2,y^s_2),\dots,(n^s_k,y^s_k)}\right)_{2k-1-2s}\bigg)\\
		&\quad+\frac{\left(\omega_{(1,2\ell)}\right)_{2k+1-2\ell}}{\om_{2k+1}\om_{2k+1-2\ell}} 
		\prod_{s\in\{m_1,\dots,m_{\ell}\}}\left(\omega_{(n^s_1,y^s_1),(n^s_2,y^s_2),\dots,(n^s_k,y^s_k)}\right)_{2k-1-2s}\\
		&=-\frac{1}{\om_{2k+1}}\sum_{j\in\{1,\dots,\ell\}}D_j\\
		&\quad+\frac{\left(\omega_{(1,2\ell)}\right)_{2k+1-2\ell}}{\om_{2k+1}\om_{2k+1-2\ell}}
		\prod_{s\in\{m_1,\dots,m_{\ell}\}}\left(\omega_{(n^s_1,y^s_1),(n^s_2,y^s_2),\dots,(n^s_k,y^s_k)}\right)_{2k-1-2s},
    \end{split}
    \end{equation}
where
    \begin{equation*}
		\begin{split}
    D_j&=\left(\omega_{(1,2),(n^j_1,y^j_1),(n^j_2,y^j_2),\dots,(n^j_k,y^j_k)}\right)_{2k-1-2j}
		\prod_{s\in\{m_1,\dots,m_{j-1}\}}\left( \omega_{(n^s_1,y^s_1),(n^s_2,y^s_2),\dots,(n^s_k,y^s_k)}\right)_{2k-1-2s}\\
		&\quad\cdot\prod_{s\in\{m_{j+1},\dots,m_{\ell}\}}\left( \omega_{(n^s_1,y^s_1),(n^s_2,y^s_2),\dots,(n^s_k,y^s_k)}\right)_{2k+1-2s}.
		\end{split}
		\end{equation*}
In this last identity, a product without factors is considered equal to one. By combining \eqref{eq:N+1s1} and \eqref{eq:N+1s2} we obtain a formula of the type \eqref{eq:n=N} with $N+1$ in place of $N$. Therefore we have now proved \eqref{eq:n=N} for all $N\in\N$. Hence it follows that to get \eqref{eq:kernelmix-general}, it is enough to show that
		\begin{equation}\label{eq:kernelmix-generalell}
    \int_\D|B^\om_{N,\ell,z}(\z)|^p\nu(\z)\,dA(\z)
		\lesssim\int_0^{|z|}\frac{\widehat{\nu}(t)}{\widehat{\om}(t)^p(1-t)^{p(1-N)}}\,dt+1,\quad z\in\D,
    \end{equation}
for each $\ell=\ell_j$ with $j\in\{1,\dots,M\}$. But \eqref{eq:momentogeneral}, \eqref{LN} and Lemma~\ref{Lemma:weights-in-D-hat} yield
		\begin{equation}\label{eq:N+1s3}
    \begin{split}
		\frac{\prod_{s\in\{m_1,\dots,m_{\ell}\}}\left(\omega_{(n^s_1,y^s_1),(n^s_2,y^s_2),\dots,(n^s_k,y^s_k)}\right)_{2k+1-2s}}
		{\prod_{m=0}^{\ell}\om_{2k+1-2m}}
		\lesssim\frac{1}{\omega_{2k+1}k^{N}}.
		\end{split}
		\end{equation}
Putting this together with \eqref{eq:N+1s1} and the Hardy-Littlewood inequality \cite[Theorem~6.3]{D}, we obtain
    \begin{equation*}\begin{split}
    \int_0^{2\pi}|B^\om_{N,\ell,z}(re^{i\t})|^p\,d\t
    \lesssim (A^{\ell}_{N}(\omega))^p
		+\sum_{n=1}^\infty\left(\frac{1}{n^{N-1}\om_{2n+1}}\right)^pn^{-2}r^{np}|z|^{np},\quad z\in\D,\quad0\le r<1,
    \end{split}
    \end{equation*}
where the right-hand side corresponds to  the case $\alpha=(1-N)p$ of Lemma~\ref{le:HLextended}. Therefore we deduce
    $$
    \int_0^{2\pi}|B^\om_{N,\ell,z}(re^{i\t})|^p\,d\t
    \lesssim (A^{\ell}_{N}(\omega))^p + \int_0^{|z|r}\frac{dt}{\widehat{\om}(t)^p(1-t)^{(1-N)p}},
		\quad z\in \D,\quad 0\le r<1,
    $$  
and it follows that
    \begin{equation*}
    \begin{split}
    \int_\D|B^\om_{N,\ell,z}(\z)|^p\nu(\z)\,dA(\z)
    &\lesssim\int_0^1\left(\int_0^{|z|r}\frac{dt}{\widehat{\om}(t)^p(1-t)^{(1-N)p}}\right)\nu(r)\,dr+1\\
		&=\int_0^{|z|}\frac{1}{\widehat{\om}(t)^p(1-t)^{(1-N)p}}\widehat{\nu}\left(\frac{t}{|z|}\right)\,dt+1\\
    &\le\int_0^{|z|}\frac{\widehat{\nu}(t)}{\widehat{\om}(t)^p(1-t)^{(1-N)p}}\,dt+1,\quad z\in\D.
    \end{split}
    \end{equation*}
Thus \eqref{eq:kernelmix-generalell} is satisfied and the proof of (i) is complete.

The assertion \eqref{eq:kernelmix-generalX} is an immediate consequence of \eqref{eq:kernelmix-general}, the trivial inequality $|1-\overline{z}\zeta|\ge1-|\zeta|$, the fact $\nu_{[Np]}\in\DDD$, which follows from Lemma~\ref{room}(ii), and \cite[Theorem~1(ii)]{PR2016/1}. We underline here that this argument does not work for $\nu\in\DD$ in general, because $\nu_{[Np]}$ does not necessarily belong to $\DDD$ when $\nu\in\DD$ by \cite[Theorem~3]{PR2020}.      
\end{proof}

\section{Proofs of the main results}\label{s3}

In this section we first build up the rest of the auxiliary results needed for proving Theorems~\ref{Theorem:hankel-characterization} and~\ref{proposition:Bloch-necessary} stated in the introduction and then prove the theorems themselves. We begin with showing that, under appropriate hypotheses on $f$ and the weights involved, the small Hankel operator induced by $\overline{f}$ only depends on its anti-analytic component $\overline{P_\om(f)}$.

\begin{lemma}\label{lemma:hankel-P}
Let $1<p<\infty$, $\om\in\DD$ and $f\in L^1_{\om_{\log}}$. Then $h^\om_{\overline{f}}(g)=h^\om_{\overline{P_\om(f)}}(g)$ for all $g\in H^\infty$.
\end{lemma}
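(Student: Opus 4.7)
The plan is to reduce the identity to the reproducing property of the Bergman kernel in $A^2_\om$. Unravelling the definition of $h^\om_{\overline{f}}$ given in the introduction yields
\[
h^\om_{\overline{f}}(g)(z)=\int_\D \overline{f(\zeta)}\,H_z(\zeta)\,\om(\zeta)\,dA(\zeta),\qquad H_z(\zeta):=g(\zeta)\,B^\om_z(\zeta),
\]
and the analogous formula with $\overline{f}$ replaced by $\overline{P_\om(f)}$. For each fixed $z\in\D$ and $g\in H^\infty$, the test function $H_z$ is analytic and satisfies $\|H_z\|_{A^2_\om}\le\|g\|_\infty\|B^\om_z\|_{A^2_\om}<\infty$, so $H_z\in A^2_\om$.

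To establish the equality of the two integrals I would insert the defining formula $P_\om(f)(\zeta)=\int_\D f(w)\overline{B^\om_\zeta(w)}\om(w)\,dA(w)$ into the expression for $h^\om_{\overline{P_\om(f)}}(g)(z)$ and apply Fubini's theorem to interchange the order of integration. Using the Hermitian symmetry $\overline{B^\om_w(\zeta)}=B^\om_\zeta(w)$, the inner integral in $\zeta$ becomes
\[
\int_\D H_z(\zeta)\,\overline{B^\om_w(\zeta)}\,\om(\zeta)\,dA(\zeta)=P_\om(H_z)(w)=H_z(w),
\]
by the reproducing property applied to $H_z\in A^2_\om$. The outer integral then reduces to $\int_\D\overline{f(w)}H_z(w)\om(w)\,dA(w)=h^\om_{\overline{f}}(g)(z)$, completing the argument.

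The main obstacle is the justification of Fubini, which amounts to showing the absolute convergence
\[
\int_\D\!\int_\D |f(w)|\,|B^\om_w(\zeta)|\,|H_z(\zeta)|\,\om(\zeta)\,\om(w)\,dA(\zeta)\,dA(w)<\infty.
\]
Two ingredients combine to close this estimate. First, for each fixed $z\in\D$ the series $B^\om_z(\zeta)=\sum_n(\overline{z}\zeta)^n/(2\om_{2n+1})$ has radius of convergence in $\overline{z}\zeta$ equal to $\liminf_n\om_{2n+1}^{1/n}=1$, because $\om\in\DD$ forces $\om_{2n+1}$ to decay only polynomially by Lemma~\ref{Lemma:weights-in-D-hat}(ii); hence $B^\om_z$ is analytic on $\{|\zeta|<1/|z|\}\supset\overline{\D}$ and therefore bounded on $\overline{\D}$, giving $|H_z(\zeta)|\le\|g\|_\infty\|B^\om_z\|_{L^\infty(\D)}$. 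Second, for $\om\in\DD$ the standard norm estimate $\|B^\om_w\|_{L^1_\om}\lesssim\log(e/(1-|w|))$ holds, and indeed this is precisely the reason that $L^1_{\om_{\log}}$ is the natural domain of $P_\om$. Combining these two bounds with the hypothesis $\|f\|_{L^1_{\om_{\log}}}<\infty$ controls the double integral by a constant depending only on $z$, $g$ and $\om$ times $\|f\|_{L^1_{\om_{\log}}}$, as required.
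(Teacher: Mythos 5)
Your proposal is correct and follows essentially the same route as the paper: justify Fubini via the pointwise bound $|g(\zeta)B^\om_z(\zeta)|\le\|g\|_{H^\infty}\sup_{\overline{\D}}|B^\om_z|$ for fixed $z$ together with the estimate $\|B^\om_w\|_{L^1_\om}\lesssim\log\frac{e}{1-|w|}$ for $\om\in\DD$ (which is \cite[Theorem~1]{PR2016/1}, the same ingredient the paper invokes), and then conclude by the reproducing property applied to $gB^\om_z\in H^\infty\subset A^2_\om$. The only cosmetic difference is that you spell out the boundedness of $B^\om_z$ on $\overline{\D}$ via the moment asymptotics, which the paper leaves implicit.
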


\begin{proof}
Tonelli's theorem and \cite[Theorem~1]{PR2016/1} yield	
	\begin{equation*}
	\begin{split}
	\left|h^\om_{\overline{P_\om(f)}}(g)(\xi)\right|
	&\le\|B_\xi^\om\|_{H^\infty}\|g\|_{H^\infty}\int_\D\left(\int_\D|f(\zeta)||B_z^\om(\zeta)|\om(\zeta)\,dA(\zeta)\right)\om(z)\,dA(z)\\
	&=\|B_\xi^\om\|_{H^\infty}\|g\|_{H^\infty}\int_\D|f(\zeta)|\left(\int_\D|B_\zeta^\om(z)|\om(z)\,dA(z)\right)\om(\zeta)\,dA(\zeta)\\
	&\lesssim\|B_\xi^\om\|_{H^\infty}\|g\|_{H^\infty}\|f\|_{L^1_{\om_{\log}}}<\infty,\quad \xi\in\D,
	\end{split}
	\end{equation*}
and therefore Fubini's theorem, and the fact that $g\in A^{p}_\om\subset A^1_\om$, yields
	\begin{equation}\label{suvlkjh}
	\begin{split}
	h^\om_{\overline{P_\om(f)}}(g)(\xi)
	&=\int_\D\left(\overline{\int_\D f(\zeta)\overline{B_z^\om(\zeta)}\om(\zeta)\,dA(\zeta)}\right)g(z)B_\xi^\om(z)\om(z)\,dA(z)\\
	&=\int_\D\overline{f(\zeta)}\left(\int_\D g(z)B_\xi^\om(z)\overline{B_\zeta^\om(z)}\om(z)\,dA(z)\right)\om(\zeta)\,dA(\zeta)\\
	&=\int_\D\overline{f(\zeta)}g(\zeta)B_\xi^\om(\zeta)\om(\zeta)\,dA(\zeta)
	=h^\om_{\overline{f}}(g)(\xi),\quad \xi\in\D.
	\end{split}
	\end{equation}
Thus $h^\om_{\overline{f}}(g)=h^\om_{\overline{P_\om(f)}}(g)$ for all $g\in H^\infty$ as claimed.
\end{proof}

The next lemma reveals that $\overline{P_\om V_{\om,\nu}(f)}=\overline{P_\om(f)}$, again under appropriate hypotheses on $f$ and the weights involved.

\begin{lemma}\label{lemma:V}
Let $\om$ be a radial weight and $\nu:\D\to[0,\infty)$ a radial function such that $\om\nu$ is a weight. Further, let $f:\D\to\C$ be measurable such that the function $z\mapsto f(z)\|B^{\om\nu}_z\|_{L^1_{\om\nu}}$ belongs to $L^1_\om$. Then $\overline{P_\om V_{\om,\nu}(f)}=\overline{P_\om(f)}$.
\end{lemma}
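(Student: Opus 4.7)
The plan is to unfold the definitions, interchange the order of integration via Fubini, and then collapse the inner integral by combining the conjugate symmetry of the Bergman kernel with the reproducing property of the kernel $B^{\om\nu}_\zeta$ in $A^2_{\om\nu}$.

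Specifically, after taking conjugates and expanding, one has
\[
\overline{P_\om V_{\om,\nu}(f)}(\xi)
=\int_\D\nu(z)\left(\int_\D\overline{f(\zeta)}\,B^{\om\nu}_z(\zeta)\,\om(\zeta)\,dA(\zeta)\right)B^\om_\xi(z)\,\om(z)\,dA(z),\quad\xi\in\D.
\]
To justify Fubini I would use the kernel symmetry $|B^{\om\nu}_z(\zeta)|=|B^{\om\nu}_\zeta(z)|$ together with the fact that $B^\om_\xi\in H^\infty$ for each fixed $\xi\in\D$ (a standard property already used implicitly in the proof of Lemma~\ref{lemma:hankel-P}). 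Tonelli's theorem then bounds the absolute integrand by
\[
\|B^\om_\xi\|_{H^\infty}\int_\D|f(\zeta)|\,\|B^{\om\nu}_\zeta\|_{L^1_{\om\nu}}\,\om(\zeta)\,dA(\zeta),
\]
which is finite precisely by the hypothesis on $f$.

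After swapping the order of integration and using $B^{\om\nu}_z(\zeta)=\overline{B^{\om\nu}_\zeta(z)}$, the inner integral in $z$ becomes $\int_\D B^\om_\xi(z)\,\overline{B^{\om\nu}_\zeta(z)}\,(\om\nu)(z)\,dA(z)$, which equals $B^\om_\xi(\zeta)$ by the reproducing formula of the Hilbert space $A^2_{\om\nu}$; this step is legitimate because $B^\om_\xi\in H^\infty$ and $\om\nu$ is a weight, so $B^\om_\xi\in A^2_{\om\nu}$. Substituting back yields
\[
\overline{P_\om V_{\om,\nu}(f)}(\xi)=\int_\D\overline{f(\zeta)}\,B^\om_\xi(\zeta)\,\om(\zeta)\,dA(\zeta)=\overline{P_\om(f)}(\xi),
\]
which is the claim. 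The main obstacle is the Fubini step, but the hypothesis $z\mapsto f(z)\|B^{\om\nu}_z\|_{L^1_{\om\nu}}\in L^1_\om$ is calibrated precisely for this; apart from that, the argument is essentially bookkeeping.
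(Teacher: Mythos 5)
Your proposal is correct and follows essentially the same route as the paper: Fubini's theorem justified by the hypothesis $z\mapsto f(z)\|B^{\om\nu}_z\|_{L^1_{\om\nu}}\in L^1_\om$, followed by collapsing the inner integral via the reproducing property of $A^2_{\om\nu}$ applied to $B^\om_\xi$. You merely spell out the Tonelli estimate (using $B^\om_\xi\in H^\infty$ and the kernel symmetry) that the paper leaves implicit.
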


\begin{proof}	
Since the function $z\mapsto f(z)\|B^{\om\nu}_z\|_{L^1_{\om\nu}}$ belongs to $L^1_\om$ by the hypothesis, Fubini's theorem yields
	\begin{equation*}
	\begin{split}
	\overline{P_\om V_{\om,\nu}(f)(\zeta)}
	&=\int_\D\left(\nu(u)\int_\D\overline{f(v)}B^{\om\nu}_u(v)\om(v)\,dA(v)\right)B^\om_\zeta(u)\om(u)\,dA(u)\\
	&=\int_\D\overline{f(v)}\left(\int_\D B_\zeta^\om(u)\overline{B^{\om\nu}_v(u)}\om(u)\nu(u)\,dA(u)\right)\om(v)\,dA(v)\\
	&=\int_\D\overline{f(v)}B^\omega_\zeta(v)\om(v)\,dA(v)
    =\overline{P_\om(f)(\zeta)},\quad\zeta\in\D,
	\end{split}
	\end{equation*}
and thus the assertion is proved.
\end{proof}

With these preparations we can show that $V_{\om,\nu}(f)\in L^\infty$ is a sufficient condition for $h^\om_{\overline{f}}: A^p_\om\to\overline{A^p_\om}$ to be bounded. 

\begin{proposition}\label{Proposition:sufficient-V}
Let $1<p<\infty$, $\om\in\DDD$ and $f\in L^1_{\om_{\log}}$. Further, let $\nu:\D\to[0,\infty)$ be a radial function such that $\om\nu$ is a weight, and assume that the function $z\mapsto f(z)\|B^{\om\nu}_z\|_{L^1_{\om\nu}}$ belongs to $L^1_\om$.
If $V_{\om,\nu}(f)\in L^\infty$, then $h^\om_{\overline{f}}: A^p_\om\to 
\overline{A^p_\om}$ is bounded and
	$$
	\|h^\om_{\overline{f}}\|_{A^p_\om\to \overline{A^p_\om}}\lesssim\|V_{\om,\nu}(f)\|_{L^\infty}.
	$$
\end{proposition}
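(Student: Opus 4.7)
The plan is to reduce the estimate to the known $L^p_\om$-boundedness of the Bergman projection by replacing the (possibly unbounded) symbol $\overline{f}$ with the bounded symbol $\overline{V_{\om,\nu}(f)}$, and then exploit the fact that the resulting operator is essentially a projection applied to a bounded function times $\overline{g}$.

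First I would restrict attention to test functions $g\in H^\infty$, where Lemma~\ref{lemma:hankel-P} gives the identity $h^\om_{\overline{f}}(g)=h^\om_{\overline{P_\om(f)}}(g)$. The hypothesis that $z\mapsto f(z)\|B^{\om\nu}_z\|_{L^1_{\om\nu}}$ belongs to $L^1_\om$ is exactly what is required to apply Lemma~\ref{lemma:V}, which then provides $\overline{P_\om V_{\om,\nu}(f)}=\overline{P_\om(f)}$, and hence $h^\om_{\overline{f}}(g)=h^\om_{\overline{P_\om V_{\om,\nu}(f)}}(g)$. Since $V_{\om,\nu}(f)\in L^\infty$ and $\om\in\DDD$ has polynomial tail decay by Lemma~\ref{Lemma:weights-in-D-hat}(ii), the function $z\mapsto\log(e/(1-|z|))\,\om(z)$ is integrable, and consequently $V_{\om,\nu}(f)\in L^1_{\om_{\log}}$. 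A second application of Lemma~\ref{lemma:hankel-P}, now with $V_{\om,\nu}(f)$ in place of $f$, yields
\[
h^\om_{\overline{f}}(g)=h^\om_{\overline{V_{\om,\nu}(f)}}(g),\qquad g\in H^\infty.
\]

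A direct computation, taking complex conjugates inside the defining integral, shows $h^\om_{\overline{\phi}}(g)=\overline{P_\om(\phi\overline{g})}$ whenever both sides make sense. With $\phi=V_{\om,\nu}(f)\in L^\infty$ and $g\in A^p_\om$ arbitrary, the product $\phi\overline{g}$ lies in $L^p_\om$ with $\|\phi\overline{g}\|_{L^p_\om}\le\|V_{\om,\nu}(f)\|_{L^\infty}\|g\|_{A^p_\om}$. Invoking the known $L^p_\om$-boundedness of the Bergman projection $P_\om$ for $\om\in\DDD$ and $1<p<\infty$ from \cite{PelRat2020}, one obtains a bounded operator $T:A^p_\om\to\overline{A^p_\om}$ given by $T(g)=\overline{P_\om(V_{\om,\nu}(f)\overline{g})}$, satisfying $\|T\|\lesssim\|V_{\om,\nu}(f)\|_{L^\infty}$. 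Since $T$ agrees with $h^\om_{\overline{f}}$ on the dense subspace $H^\infty\subset A^p_\om$, it is the desired bounded extension, and the stated estimate follows.

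The main obstacle in executing this plan is essentially bookkeeping: carefully verifying that the integrability hypotheses of Lemmas~\ref{lemma:hankel-P} and~\ref{lemma:V} are met in each application, and citing the correct known $L^p_\om$-boundedness theorem for $P_\om$ under $\om\in\DDD$. No kernel estimates and in particular no use of Lemma~\ref{kernelmix-general} are required for this sufficiency half of Theorem~\ref{Theorem:hankel-characterization}; the heavier machinery is reserved for the necessity direction.
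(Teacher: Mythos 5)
Your argument is correct and follows essentially the same route as the paper: the reduction $h^\om_{\overline f}(g)=h^\om_{\overline{V_{\om,\nu}(f)}}(g)$ for $g\in H^\infty$ via Lemmas~\ref{lemma:hankel-P} and \ref{lemma:V}, the bound through the $L^p_\om$-boundedness of the Bergman projection for $\om\in\DDD$ (the paper invokes $P^+_\om$ via \cite[Theorem~9]{PelRat2020}, you use $P_\om$, which is immaterial), and the extension by density of $H^\infty$ in $A^p_\om$. One small correction: the integrability of $\om_{\log}$, which you need for $L^\infty\subset L^1_{\om_{\log}}$ before reapplying Lemma~\ref{lemma:hankel-P} to $V_{\om,\nu}(f)$, does not follow from Lemma~\ref{Lemma:weights-in-D-hat}(ii) (that condition only bounds $\widehat{\om}$ from below), but from the $\Dd$ half of $\om\in\DDD$, which yields $\widehat{\om}(r)\lesssim(1-r)^\alpha$ for some $\alpha>0$ and hence $\int_0^1\widehat{\om}(r)(1-r)^{-1}\,dr<\infty$.
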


\begin{proof}
By Lemmas~\ref{lemma:hankel-P} and \ref{lemma:V}, and \cite[Theorem~9]{PelRat2020} we have
	\begin{equation*}
	\begin{split}
	\|h^\om_{\overline{f}}(g)\|_{L^{p}_\om}
	&=\|h^\om_{\overline{P_\om(f)}}(g)\|_{L^{p}_\om}
	=\|h^\om_{\overline{P_\om V_{\om,\nu}(f)}}(g)\|_{L^{p}_\om}
	=\|h^\om_{\overline{V_{\om,\nu}(f)}}(g)\|_{L^{p}_\om}\\
	&\le\|P_\om^+(|V_{\om,\nu}(f)g|)\|_{L^{p}_\om}
	\lesssim\|V_{\om,\nu}(f)g\|_{L^{p}_\om}
	\le\|V_{\om,\nu}(f)\|_{L^\infty}\|g\|_{A^{p}_\om},\quad g\in H^\infty,
	\end{split}
	\end{equation*}
and the assertion follows from the BLT-theorem~\cite[Theorem~I.7]{ReedSimon}, because $H^\infty$ is dense in $A^p_\om$ since $\om$ is radial.
\end{proof}

The necessity of the condition $V_{\om,\nu}(f)\in L^\infty$, with $\nu(z)=(1-|z|)^n$ for some $n$ large enough, for $h^\om_{\overline{f}}: A^p_\om\to\overline{A^p_\om}$ to be bounded is established next.

\begin{proposition}\label{proposition:Bounded-necessary}
Let $1<p<\infty$, $\om\in\DDD$ and $f\in L^1_{\om_{\log}}$ such that $h^\om_{\overline{f}}:A^p_\om\to\overline{A^p_\om}$ is bounded. Then there exists $n_0=n_0(\omega)\in\N$ such that, for each $n\ge n_0$, the weight $\nu(z)=(1-|z|)^n$ satisfies $\|V_{\om,\nu}(f)\|_{L^\infty}\lesssim\|h^\om_{\overline{f}}\|_{A^p_\om\to\overline{A^p_\om}}$.
\end{proposition}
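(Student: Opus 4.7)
\emph{Plan.} I test the bilinear identity
	$$
	\int_\D h^\om_{\overline f}(g_1)(w)\,g_2(w)\,\om(w)\,dA(w)
	=\int_\D \overline{f(\zeta)}\,g_1(\zeta)\,g_2(\zeta)\,\om(\zeta)\,dA(\zeta),\qquad g_1,g_2\in H^\infty,
	$$
against an analytic pair $(g_1,g_2)$ whose product equals $B^{\om\nu}_z$. The identity holds for every $f\in L^1_{\om_{\log}}$ by Fubini together with the $A^2_\om$-reproducing property of $B^\om$, exactly as in the first part of the proof of Lemma~\ref{lemma:hankel-P}; the absolute-integrability assumption is satisfied because $\|B^\om_\zeta\|_{A^1_\om}\lesssim\log(e/(1-|\zeta|))$ by \cite[Theorem~1]{PR2016/1}.

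Fix $z\in\D$, set $\nu(z)=(1-|z|)^n$, and for an integer $N$ to be chosen put
	$$
	g_1(\zeta)=(1-\overline z\zeta)^{N}B^{\om\nu}_z(\zeta),\qquad g_2(\zeta)=(1-\overline z\zeta)^{-N},\qquad\zeta\in\D,
	$$
when $p\ge 2$, and interchange their roles when $1<p<2$ so that Lemma~\ref{kernelmix-general} is always applied with an exponent at least $2$. In either case $g_1,g_2\in H^\infty$ and $g_1g_2\equiv B^{\om\nu}_z$; substituting into the identity gives $\overline{V_{\om,\nu}(f)(z)}/\nu(z)=\int h^\om_{\overline f}(g_1)g_2\,\om\,dA$. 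H\"older's inequality and the assumed boundedness of $h^\om_{\overline f}$ then yield
	$$
	|V_{\om,\nu}(f)(z)|\le \nu(z)\,\|h^\om_{\overline f}\|_{A^p_\om\to\overline{A^p_\om}}\,\|g_1\|_{A^p_\om}\,\|g_2\|_{A^{p'}_\om}.
	$$

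For the factor carrying $B^{\om\nu}_z$ I apply Lemma~\ref{kernelmix-general} with $\om\nu$ and $\om$ in the roles of the lemma's $\om$ and $\nu$ respectively. Here $\om\nu\in\DDD$ by Lemma~\ref{room}(ii), which also gives $\widehat{\om\nu}(t)\asymp\widehat\om(t)(1-t)^n$, since $(1-\cdot)^n=\widehat{\tilde\nu}$ for the standard weight $\tilde\nu(r)=n(1-r)^{n-1}\in\DD$. Combined with $\widehat\om(t)\ge\widehat\om(|z|)$ on $[0,|z|]$, this yields (writing the case $p\ge 2$)
	$$
	\|g_1\|_{A^p_\om}^p\lesssim \widehat\om(|z|)^{-(p-1)}(1-|z|)^{1-p(n+1-N)},\qquad N<n+\tfrac{1}{p'}.
	$$
For the polynomial factor I use Lemma~\ref{Lemma:weights-in-D-hat}(iii) with $\lambda=Np'-1$, valid once $Np'-1\ge\lambda(\om)$, to obtain $\|g_2\|_{A^{p'}_\om}^{p'}\asymp \widehat\om(z)(1-|z|)^{1-Np'}$. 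A direct exponent count then shows that the $\widehat\om(|z|)$-exponent of $\|g_1\|_{A^p_\om}\|g_2\|_{A^{p'}_\om}$ equals $-(p-1)/p+1/p'=0$, and the $(1-|z|)$-exponent of $\nu(z)\|g_1\|_{A^p_\om}\|g_2\|_{A^{p'}_\om}$ equals
	$$
	n+\frac{1-p(n+1-N)}{p}+\frac{1-Np'}{p'}=-1+\frac{1}{p}+\frac{1}{p'}=0.
	$$
Hence $\nu(z)\|g_1\|_{A^p_\om}\|g_2\|_{A^{p'}_\om}\lesssim 1$ uniformly in $z\in\D$, giving $\|V_{\om,\nu}(f)\|_{L^\infty}\lesssim\|h^\om_{\overline f}\|_{A^p_\om\to\overline{A^p_\om}}$. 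Choosing $n_0=n_0(\om)=\lceil\lambda(\om)\rceil+1$ and $N=n\ge n_0$ verifies simultaneously the smallness constraint $N<n+1/p'$ (respectively $N<n+1/p$ in the swapped case) and the Forelli--Rudin threshold for every $1<p<\infty$.

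\emph{Main obstacle.} The crux is spotting and realizing the double exponent cancellation: each of the three factors $\nu(z)$, $\|g_1\|_{A^p_\om}$, $\|g_2\|_{A^{p'}_\om}$ depends nontrivially on $z$, two of them blowing up as $|z|\to 1^-$, while only the correctly tuned product remains uniformly bounded. Making it work requires applying Lemma~\ref{kernelmix-general} to the weight $\om\nu$ rather than $\om$, splitting the polynomial $(1-\overline z\zeta)^N$ between the two test functions, and aligning the powers through the asymptotic $\widehat{\om\nu}\asymp\widehat\om(1-\cdot)^n$ from Lemma~\ref{room}(ii).
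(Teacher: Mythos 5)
Your proposal is correct and follows essentially the same route as the paper: testing the duality identity $\langle\overline{h^\om_{\overline f}(g)},h\rangle_{A^2_\om}=\int_\D f\,\overline{gh}\,\om\,dA$ against $g(\z)=(1-\overline z\z)^nB^{\om\nu}_z(\z)$ and $h(\z)=(1-\overline z\z)^{-n}$ (roles swapped for $1<p<2$), then invoking Lemma~\ref{kernelmix-general} for $\om\nu=\om_{[n]}\in\DDD$ together with Lemma~\ref{room}(ii) and Lemma~\ref{Lemma:weights-in-D-hat}(iii). The only (harmless) deviation is that you bound the resulting integral crudely via $\widehat\om(t)\ge\widehat\om(|z|)$ on $[0,|z|]$ instead of evaluating it asymptotically as $\left(\widehat\om(z)(1-|z|)\right)^{1-p}$ as the paper does, which yields the same cancellation of exponents.
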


\begin{proof}
By Tonelli's theorem, \cite[Theorem~1]{PR2016/1} and the hypothesis $f\in L^1_{\om_{\log}}$, we deduce
	\begin{equation*}
	\begin{split}
	\int_\D\left(\int_\D|f(\zeta)||g(\zeta)||B_z^\om(\zeta)|\om(\zeta)\,dA(\zeta)\right)|h(z)|\om(z)\,dA(z)
	\lesssim\|g\|_{H^\infty}\|h\|_{H^\infty}\|f\|_{L^1_{\om_{\log}}}<\infty,
	\end{split}
	\end{equation*}
and hence Fubini's theorem yields
	\begin{equation}\label{eq:fubini}
	\begin{split}
	\left\langle \overline{h^\om_{\overline{f}}(g)},h\right\rangle_{A^2_\om}
	&=\int_\D\left(\overline{\int_\D\overline{f(\zeta)}g(\zeta)B_z^\om(\zeta)\om(\zeta)\,dA(\zeta)}\right)\overline{h(z)}\om(z)\,dA(z)\\
	&=\int_\D f(\zeta)\overline{g(\zeta)}\left(\int_\D \overline{B_z^\om(\zeta)h(z)}\om(z)\,dA(z)\right)\om(\zeta)\,dA(\zeta)\\
	&=\int_\D f(\zeta)\overline{g(\zeta)}\left(\overline{\int_\D h(z)\overline{B_\zeta^\om(z)}\om(z)\,dA(z)}\right)\om(\zeta)\,dA(\zeta)\\
	&=\int_\D f(\zeta)\overline{g(\zeta)h(\zeta)}\om(\zeta)\,dA(\zeta), \quad g\in H^\infty,\quad h\in H^\infty.
	\end{split}
	\end{equation}
Therefore H\"older's inequality and the boundedness of $h^\om_{\overline{f}}:A^p_\om\to\overline{A^p_\om}$ give
	\begin{equation}\label{jhf}
	\begin{split}
	\left|\int_\D f(\zeta)\overline{g(\zeta)h(\zeta)}\om(\zeta)\,dA(\zeta)\right|
	&=\left|\left\langle \overline{h^\om_{\overline{f}}(g)},h\right\rangle_{A^2_\om}\right|
	\le\left\|\overline{h^\om_{\overline{f}}(g)}\right\|_{A^{p}_\om}\|h\|_{A^{p'}_\om}\\
	&\le\|h^\om_{\overline{f}}\|_{A^p_\om\to\overline{A^p_\om}}\|g\|_{A^p_\om}\|h\|_{A^{p'}_\om}, \quad
	g\in H^\infty,\quad h\in H^\infty.
	\end{split}
	\end{equation}

If $p\ge 2$, then, for each $z\in\D$ and $n\in\N$, choose the functions $g,h\in H^\infty$ such that $g(\zeta)=B_z^{\om\nu}(\zeta)(1-\overline{z}\zeta)^n$ and $h(\zeta)=(1-\overline{z}\zeta)^{-n}$ for all $\zeta\in\D$. Then $gh=B^{\om\nu}_z$ on the left-hand side of \eqref{jhf}, and therefore
	\begin{equation*}
	\begin{split}
	\left|\int_\D f(\zeta)\overline{B^{\om\nu}_z}(\zeta)\om(\zeta)\,dA(\zeta)\right|
	&\le\|h^\om_{\overline{f}}\|_{A^p_\om\to\overline{A^p_\om}}
	\left(\int_\D|B^{\om\nu}_z(\zeta)(1-\overline{z}\z)^n|^p\om(\z)\,dA(\z)\right)^\frac1p\\
	&\quad\cdot\left(\int_\D\frac{\om(\zeta)}{|1-\overline{z}\zeta|^{np'}}\,dA(\zeta)\right)^\frac{1}{p'},\quad z\in\D.
	\end{split}
	\end{equation*}
Next, take $n_0=n_0(\omega)$ such that $n_0\ge\lambda+1$, where $\lambda=\lambda(\omega)\ge0$ is that of Lemma~\ref{Lemma:weights-in-D-hat}(iii). Then we have
	\begin{equation*}
	\int_\D\frac{\om(\zeta)}{|1-\overline{z}\zeta|^{np'}}\,dA(\zeta)
	\lesssim\frac{\widehat{\om}(z)}{(1-|z|)^{np'-1}},\quad z\in\D,
	\end{equation*}
for each fixed $n\ge n_0$. Moreover, since $\om\in\DDD$ by the hypothesis, $\om\nu=\om_{[n]}\in\DDD$ and $\widehat{\om_{[n]}}(z)\asymp\widehat{\om}(z)(1-|z|)^n$ for all $n\in\N$ by Lemma~\ref{room}(ii). Therefore Lemma~\ref{kernelmix-general} 
 yields
	\begin{equation*}
	\begin{split}
	\int_\D|B^{\om\nu}_z(\zeta)(1-\overline{z}\z)^n|^p\om(\z)\,dA(\z)
	&\lesssim\int_0^{|z|}\frac{\widehat{\om}(t)}{\widehat{\om_{[n]}}(t)^p(1-t)^{p(1-n)}}\,dt+1\\
	&\asymp\int_0^{|z|}\frac{dt}{\widehat{\om}(t)^{p-1}(1-t)^{p}}+1\\
	&\asymp\frac1{\left(\widehat{\om}(z)(1-|z|)\right)^{p-1}},\quad z\in\D.
	\end{split}
	\end{equation*}
Thus
	\begin{equation}\label{eq:j2}
	\begin{split}
	\sup_{z\in\D}(1-|z|)^n\left|\int_\D f(\zeta)\overline{B^{\om\nu}_z(\zeta)}\om(\zeta)\,dA(\zeta)\right|
	&\lesssim\|h^\om_{\overline{f}}\|_{A^p_\om\to\overline{A^p_\om}},
	\end{split}
	\end{equation}
and the assertion follows.

If $p'>2$, then, for each $z\in\D$ and $n\in\N$, choose the functions $h,g\in H^\infty$ such that $h(\zeta)=B_z^{\om\nu}(\zeta)(1-\overline{z}\zeta)^n$ and $g(\zeta)=(1-\overline{z}\zeta)^{-n}$ for all $\zeta\in\D$. Then $gh=B^{\om\nu}_z$ in \eqref{jhf}, and by arguing as above we obtain \eqref{eq:j2}. This finishes the proof of the proposition. 
\end{proof}

By using Propositions~\ref{Proposition:sufficient-V} and~\ref{proposition:Bounded-necessary} it is now easy to prove Theorem~\ref{Theorem:hankel-characterization}, stated in the introduction.

\medskip

\begin{Prf}{\em{Theorem~\ref{Theorem:hankel-characterization}.}}
If $n\in\N$, then
	$$
	\|B^{\om_{[n]}}_z\|_{L^1_{\om_{[n]}}}\asymp\log\frac{e}{1-|z|},\quad z\in\D,
	$$
by Lemma~\ref{room}(ii) and \cite[Theorem~1]{PR2016/1}. Therefore 
	\begin{equation*}
	\begin{split}
	\left\|f(\cdot)\left\|B^{\om_{[n]}}_{(\cdot)}\right\|_{L^1_{\om_{[n]}}}\right\|_{L^1_\om}
	&\asymp\|f\|_{L^1_{\om_{\log}}}<\infty
	\end{split}
	\end{equation*}
by the hypothesis $f\in L^1_{\om_{\log}}$. Consequently, the assertion of the theorem follows by Propositions~\ref{Proposition:sufficient-V} and~\ref{proposition:Bounded-necessary}.
\end{Prf}

\medskip

We now proceed towards the proof of Theorem~\ref{proposition:Bloch-necessary}.  The next result about the fractional derivatives of the dilated functions $f_r(z)=f(rz)$ will be used in its proof.

\begin{lemma}\label{le:nuevo}
Let $\om, \nu$ be radial weights and $f\in\H(\D)$ such that  $V_{\om,\widehat{\nu}}(f)\in L^\infty$. Then
	$$
	\|V_{\om,\widehat{\nu}}(f_r)\|_{L^\infty}\le  \|V_{\om,\widehat{\nu}}(f)\|_{L^\infty}, \quad 0<r<1.
	$$ 
\end{lemma}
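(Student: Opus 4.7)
The plan is to carry out an explicit power-series computation that reduces the claim to a monotonicity property of $\widehat{\nu}$. Writing $f(\zeta)=\sum_{n\ge 0}a_n\zeta^n$ and recalling that for any radial weight $\mu$ the kernel expands as $B^\mu_z(\zeta)=\sum_{n\ge 0}(\overline{z}\zeta)^n/(2\mu_{2n+1})$, I would apply the orthogonality relation $\int_\D\zeta^k\overline{\zeta}^n\om(\zeta)\,dA(\zeta)=2\delta_{k,n}\om_{2k+1}$, which holds because $\om$ is radial. Term-by-term integration is legal since $f_r$ is bounded on $\overline{\D}$, giving
\begin{equation*}
V_{\om,\widehat{\nu}}(f_r)(z)=\widehat{\nu}(z)\sum_{n\ge 0}\frac{a_n\om_{2n+1}}{(\om\widehat{\nu})_{2n+1}}(rz)^n=\widehat{\nu}(z)\,F(rz),\qquad z\in\D,\ 0<r<1,
\end{equation*}
where $F(w):=\sum_{n\ge 0}\frac{a_n\om_{2n+1}}{(\om\widehat{\nu})_{2n+1}}w^n$ is a power series depending only on $f$ and the weights.

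Running the same computation with $r$ replaced by any $\rho<1$ yields $V_{\om,\widehat{\nu}}(f_\rho)(z)=\widehat{\nu}(z)F(\rho z)$; since every $w\in\D$ can be written as $\rho z$ with $|z|<1$ and $\rho<1$, this in particular forces $F$ to be analytic on $\D$. Passing $\rho\to 1^-$ inside the integral defining $V_{\om,\widehat{\nu}}(f_\rho)$, which is justified by dominated convergence combined with the integrability of $f$ implicit in the hypothesis $V_{\om,\widehat{\nu}}(f)\in L^\infty$, identifies $V_{\om,\widehat{\nu}}(f)(w)=\widehat{\nu}(w)F(w)$ for every $w\in\D$. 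Combining the two identities produces the key equation
\begin{equation*}
V_{\om,\widehat{\nu}}(f_r)(z)=\frac{\widehat{\nu}(z)}{\widehat{\nu}(rz)}\,V_{\om,\widehat{\nu}}(f)(rz),\qquad z\in\D,\ 0<r<1.
\end{equation*}

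To finish, I would invoke the fact that $\widehat{\nu}(w)=\int_{|w|}^1\nu(s)\,ds$ is non-increasing as a function of $|w|$, so $|rz|\le|z|$ yields $\widehat{\nu}(rz)\ge\widehat{\nu}(z)$ and hence $\widehat{\nu}(z)/\widehat{\nu}(rz)\le 1$. Therefore
\begin{equation*}
|V_{\om,\widehat{\nu}}(f_r)(z)|\le|V_{\om,\widehat{\nu}}(f)(rz)|\le\|V_{\om,\widehat{\nu}}(f)\|_{L^\infty},\qquad z\in\D,
\end{equation*}
and taking the supremum in $z$ completes the argument. The only delicate point is the passage $\rho\to 1^-$ needed to identify $V_{\om,\widehat{\nu}}(f)$ with $\widehat{\nu}F$; everything else is a routine power-series manipulation followed by the monotonicity of $\widehat{\nu}$.
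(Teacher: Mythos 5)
Your core computation reproduces the paper's key identity: both arguments hinge on writing $V_{\om,\widehat{\nu}}(f_r)(z)=\widehat{\nu}(z)F(rz)$, where $F=V_{\om,\widehat{\nu}}(f)/\widehat{\nu}=\sum_{n\ge0}\widehat{f}(n)\om_{2n+1}(\om\widehat{\nu})_{2n+1}^{-1}z^n$ is analytic in $\D$, which is exactly \eqref{eq:nuevo1}. Where you genuinely diverge is the endgame: the paper extends $\widehat{\nu}$ by zero to $\partial\D$, notes that $V_{\om,\widehat{\nu}}(f_r)$ is then continuous on $\overline{\D}$ and attains its maximum at an interior point $z^r$, and applies the maximum modulus principle to $F$ on $\{|u|\le|z^r|\}$, using that $\widehat{\nu}$ is constant on circles. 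You instead use the radial monotonicity $\widehat{\nu}(z)\le\widehat{\nu}(rz)$ to get the pointwise bound $|V_{\om,\widehat{\nu}}(f_r)(z)|\le|V_{\om,\widehat{\nu}}(f)(rz)|$, which is valid (note $\widehat{\nu}(rz)>0$ by the standing positivity assumption on tail integrals) and arguably cleaner: no boundary extension, no attainment-of-maximum step, no maximum modulus principle. The only point to make explicit is that $|V_{\om,\widehat{\nu}}(f)(rz)|\le\|V_{\om,\widehat{\nu}}(f)\|_{L^\infty}$ holds pointwise because $V_{\om,\widehat{\nu}}(f)=\widehat{\nu}F$ is continuous in $\D$, so the essential supremum controls the genuine supremum; the paper's argument needs the same remark.

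The one step that does not hold up as written is the identification $V_{\om,\widehat{\nu}}(f)=\widehat{\nu}F$ obtained by letting $\rho\to1^-$ ``by dominated convergence'': the hypothesis $V_{\om,\widehat{\nu}}(f)\in L^\infty$ supplies no dominating function, and the natural candidate $\sup_{0<\rho<1}|f(\rho\zeta)|\,|B^{\om\widehat{\nu}}_z(\zeta)|\,\om(\zeta)$ involves $\max_{|u|\le|\zeta|}|f(u)|$, which need not be $\om$-integrable even when $f\in L^1_\om$. Two easy repairs: (a) treat $f$ exactly as you treated $f_r$, expanding the conjugate kernel and integrating term by term; Fubini is justified because $f\in L^1_\om$ (implicit in $V_{\om,\widehat{\nu}}(f)$ being defined, cf.\ \eqref{V}) and $(\om\widehat{\nu})_{2n+1}\ge\varepsilon^{n+\frac12}\widehat{\om\widehat{\nu}}(\sqrt{\varepsilon})$ for every $0<\varepsilon<1$, so $\sum_{n}|z|^n(\om\widehat{\nu})_{2n+1}^{-1}<\infty$ for fixed $z\in\D$ --- this coefficient bound is precisely how the paper gets $F\in\H(\D)$ directly; or (b) keep the limit argument but replace dominated convergence by the fact that $f_\rho\to f$ in $L^1_\om$ as $\rho\to1^-$ for radial $\om$, combined with the boundedness of $B^{\om\widehat{\nu}}_z$ for fixed $z$. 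Similarly, ``term-by-term integration is legal since $f_r$ is bounded'' should be upgraded to: the Taylor series of $f_r$ has radius of convergence $1/r>1$ and hence converges uniformly on $\overline{\D}$. With these adjustments your proof is complete and slightly more elementary than the paper's.
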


\begin{proof} Assume that $f\not\equiv0$, otherwise the result is trivial.
Since $z\mapsto f(z)=\sum_{n=0}^\infty\widehat{f}(n)z^n$ belongs to $\H(\D)$, and 
	$$
	\frac{\om_{2n+1}}{(\om\widehat{\nu})_{2n+1}}
	\le\frac{\om_1}{\e^{n+\frac{1}{2}}\widehat{\om\widehat{\nu}}(\sqrt{\e})},
	\quad 0<\e<1,\quad n\in \N\cup\{0\},
	$$ 
we deduce that $z\mapsto\frac{V_{\om,\widehat{\nu}}(f)(z)}{\widehat{\nu}(z)}=\sum_{n=0}^\infty \frac{\widehat{f}(n) \om_{2n+1}}{(\om\widehat{\nu})_{2n+1}}z^n$ belongs to $\H(\D)$.  Moreover, an analogous argument shows that
	\begin{equation}\label{eq:nuevo1}
	\frac{V_{\om,\widehat{\nu}}(f_r)}{\widehat{\nu}}=\left( \frac{V_{\om,\widehat{\nu}}(f)}{\widehat{\nu}}\right)_r
	\end{equation} 
is analytic in $D\left(0,\frac{1}{r}\right)$ for each $0<r<1$. By setting $\widehat{\nu}\equiv0$ on $\partial\D$, the function $\widehat{\nu}$ becomes continuous $\overline{\D}$. Hence we deduce that $V_{\om,\widehat{\nu}}(f)$ is continuous in $\D$, and $V_{\om,\widehat{\nu}}(f_r)$ is continuous in $\overline{\D}$ for each for each $0<r<1$. Consequently, for each $0<r<1$ there exists $z^r\in \D$ (observe that $V_{\om,\widehat{\nu}}(f_r)(\zeta)=0$ for each $\zeta\in \partial\D$) such that 
$\|V_{\om,\widehat{\nu}}(f_r)\|_{L^\infty}=\left| V_{\om,\widehat{\nu}}(f_r)(z^r)\right|$. 
Therefore \eqref{eq:nuevo1} and the maximum modulus principle imply
	\begin{equation*}
	\begin{split}
	\|V_{\om,\widehat{\nu}}(f_r)\|_{L^\infty}
	&=\widehat{\nu}(z^r)\left| \frac{V_{\om,\widehat{\nu}}(f_r)(z^r)}{\widehat{\nu}(z^r)}\right|
	=\widehat{\nu}(z^r)\left| \left( \frac{V_{\om,\widehat{\nu}}(f)}{\widehat{\nu}}\right)(rz^r)\right|\\
	&\le\widehat{\nu}(z^r)\max_{|u|\le |z^r|}\left| \left( \frac{V_{\om,\widehat{\nu}}(f)}{\widehat{\nu}}\right)(u)\right|
	=\widehat{\nu}(z^r)\max_{|u|
	=|z^r|}\left|\left(\frac{V_{\om,\widehat{\nu}}(f)}{\widehat{\nu}}\right)(u)\right|\\
	&=\max_{|u|= |z^r|}\left| V_{\om,\widehat{\nu}}(f)(u)\right|\le  \|V_{\om,\widehat{\nu}}(f)\|_{L^\infty}, \quad 0<r<1.
	\end{split}
	\end{equation*}
This finishes the proof.
\end{proof}

The following characterization of the Bloch space, which is interesting in its own right, is one of the key ingredients in the proof of Theorem~\ref{proposition:Bloch-necessary}. 

\begin{proposition}\label{pr:Blochdescription}
Let $f\in \H(\D)$ and $\omega,\nu\in\DDD$. Then $f\in\B$ if and only if $V_{\om,\widehat{\nu}}(f)\in L^\infty$. Moreover, 
	$$
	\|f\|_{\B}\asymp \|V_{\om,\widehat{\nu}}(f)\|_{L^\infty}.
	$$
\end{proposition}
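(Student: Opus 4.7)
The strategy rests on the reproducing identity $f = P_\om V_{\om,\widehat{\nu}}(f)$, which follows from Lemma~\ref{lemma:V} applied with the radial function $\widehat{\nu}$ (admissible since $\om\widehat{\nu}\in\DDD$ by Lemma~\ref{room}(ii)) together with the fact that $P_\om(f)=f$ for analytic $f$ of sufficient growth. To secure the integrability hypothesis of Lemma~\ref{lemma:V}, namely that $z\mapsto f(z)\|B^{\om\widehat{\nu}}_z\|_{L^1_{\om\widehat{\nu}}}$ lie in $L^1_\om$, I would first work with the bounded dilations $f_r$, $r\in(0,1)$, noting that $\|B^{\om\widehat{\nu}}_z\|_{L^1_{\om\widehat{\nu}}}\asymp\log(e/(1-|z|))$ by \cite[Theorem~1]{PR2016/1} and Lemma~\ref{room}(ii), and then pass to $r\to 1^{-}$ using Lemma~\ref{le:nuevo} in the sufficiency direction and the standard fact $\|f_r\|_\B\le\|f\|_\B$ in the necessity direction.

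For $V_{\om,\widehat{\nu}}(f)\in L^\infty\Rightarrow f\in\B$, differentiation of $f_r=P_\om V_{\om,\widehat{\nu}}(f_r)$ under the integral sign yields
\[
f'_r(z)=\int_\D V_{\om,\widehat{\nu}}(f_r)(\zeta)\,\partial_z\overline{B^\om_z(\zeta)}\,\om(\zeta)\,dA(\zeta),
\]
so that the conclusion $(1-|z|)|f'_r(z)|\lesssim\|V_{\om,\widehat{\nu}}(f)\|_{L^\infty}$ follows once one has the kernel estimate
\[
\int_\D|\partial_z\overline{B^\om_z(\zeta)}|\,\om(\zeta)\,dA(\zeta)\lesssim\frac{1}{1-|z|}.
\]
For the converse, apply the Littlewood--Paley identity for radial weights
\[
\int_\D u\overline{v}\om\,dA=2u(0)\overline{v(0)}\om_1+2\int_\D u'\overline{v'}\om^*\,dA,\qquad\om^*(\zeta)\asymp(1-|\zeta|)\widehat{\om}(\zeta),
\]
with $u=f_r$ and $v=B^{\om\widehat{\nu}}_z$; combined with the Bloch bound $|f'_r(\zeta)|(1-|\zeta|)\le\|f\|_\B$ and the equivalence $\widehat{\om\widehat{\nu}}\asymp\widehat{\om}\widehat{\nu}$ of Lemma~\ref{room}(ii), this reduces the claim to the dual kernel estimate
\[
\widehat{\nu}(z)\int_\D|\partial_\zeta B^{\om\widehat{\nu}}_z(\zeta)|\,\widehat{\om}(\zeta)\,dA(\zeta)\lesssim 1.
\]

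The principal obstacle lies in establishing these two $L^1$-type kernel estimates for the derivatives of the reproducing kernels, as they are not direct consequences of Lemma~\ref{kernelmix-general}, which controls $(1-\bar z\zeta)^N B^\om_z$ rather than derivatives of the kernel themselves. However, the telescoping Taylor-coefficient analysis that produced the identities \eqref{eq:n=1}, \eqref{eq:difmoments2}, and \eqref{eq:N+1s2} applies mutatis mutandis to the series $\partial_{\bar z}B^\om_z(\zeta)=\sum_{n\ge 1}n\bar z^{n-1}\zeta^n/(2\om_{2n+1})$ and $\partial_\zeta B^{\om\widehat{\nu}}_z(\zeta)=\sum_{n\ge 1}n\bar z^n\zeta^{n-1}/(2(\om\widehat{\nu})_{2n+1})$; combining this with the Hardy--Littlewood inequality, Lemma~\ref{le:HLextended} at suitable exponents, and the moment asymptotics $\om_{2n+1}\asymp\widehat{\om}(1-1/n)$ valid for $\om\in\DDD$ should yield the required bounds.
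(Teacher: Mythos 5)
Your architecture is a genuinely different route from the paper's, and in outline it is viable. The paper proves $\|V_{\om,\widehat{\nu}}(f)\|_{L^\infty}\lesssim\|f\|_{\B}$ by invoking the surjectivity of $P_\om:L^\infty\to\B$ from \cite[Theorem~3]{PelRat2020} (writing $f=P_\om(h)$ with $\|h\|_{L^\infty}\asymp\|f\|_\B$, so that only the $N=0$, $p=1$ kernel estimate of \cite[Theorem~1]{PR2016/1} is needed), and proves the converse by the duality $(A^1_\om)^\star\simeq\B$ under the $A^2_\om$-pairing together with Lemma~\ref{le:nuevo}; you replace both uses of \cite[Theorem~3]{PelRat2020} by direct kernel manipulations (reproducing identity plus differentiation in one direction, a polarized Littlewood--Paley identity plus the pointwise Bloch bound in the other). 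Your reductions are sound: the constant terms are harmless, $\om^*\asymp(1-|\cdot|)\widehat{\om}$ and $\widehat{\widehat{\om}}\asymp(1-|\cdot|)\widehat{\om}$ for $\om\in\DD$, the dilation and limiting steps via Lemma~\ref{le:nuevo} and $\|f_r\|_\B\le\|f\|_\B$ work, and the final integral $\int_0^{|z|}\frac{dt}{\widehat{\nu}(t)(1-t)}\lesssim\widehat{\nu}(z)^{-1}$ is handled exactly as in the paper using $\nu\in\Dd$. If completed, your argument would avoid both the $L^\infty\to\B$ surjectivity and the $(A^1_\om)^\star\simeq\B$ duality, at the price of needing kernel-derivative estimates.

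That price is where the genuine gap lies. Both directions of your proof rest on the $p=1$ estimates $\int_\D|\partial_z\overline{B^\om_z(\zeta)}|\,\om(\zeta)\,dA(\zeta)\lesssim(1-|z|)^{-1}$ and $\widehat{\nu}(z)\int_\D|\partial_\zeta B^{\om\widehat{\nu}}_z(\zeta)|\,\widehat{\om}(\zeta)\,dA(\zeta)\lesssim1$, which you do not prove, and the route you indicate --- redoing the coefficient analysis and applying ``the Hardy--Littlewood inequality and Lemma~\ref{le:HLextended} at suitable exponents'' as in Lemma~\ref{kernelmix-general} --- does not deliver them. The Hardy--Littlewood step used in Section~\ref{s2}, namely $M_p^p(r,g)\lesssim\sum_n(n+1)^{p-2}|\widehat{g}(n)|^p r^{np}$, is valid only for $2\le p<\infty$ (this is precisely why Lemma~\ref{kernelmix-general} is restricted to that range), whereas your estimates are $L^1$ estimates; for $p=1$ the corresponding upper bound is false even for power series with non-negative coefficients (lacunary series give a counterexample), so positivity of the kernel coefficients does not save the argument. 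Thus the central analytic input of your proposal is missing as written. It can be supplied: these are exactly the $p=1$ cases of the $L^p_\nu$-estimates for derivatives of $B^\om_z$, $\om\in\DD$, available from \cite[Theorem~1]{PR2016/1} and its methods, where the small-$p$ range is handled by a dyadic-block decomposition exploiting that $1/\om_{2n+1}$ is essentially constant on dyadic blocks by Lemma~\ref{Lemma:weights-in-D-hat}(iv); citing or reproducing that argument (and then concluding with Lemma~\ref{room} and the $\Dd$-step as you intend) closes the gap.
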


\begin{proof}
Let first $f\in\B$. Then, by \cite[Theorem~3]{PelRat2020}, there exists $h\in L^\infty$ such that $P_\omega(h)=f$ and 
$\| h\|_{L^\infty}\asymp\|f\|_{\B}$. This together with Fubini's theorem, \cite[Theorem~1]{PR2016/1} and Lemma~\ref{room}(iii) yields
	$$
	\int_{\D} f(\zeta)\overline{B^{\om\widehat{\nu}}_z(\zeta)}\omega(\zeta)\,dA(\zeta)
	=\int_{\D}h(u)\overline{B^{\om\widehat{\nu}}_z(u)}\omega(u)\,dA(u),\quad z\in\D.
	$$
Therefore, by Lemma~\ref{room}(ii) and \cite[Theorem~1]{PR2016/1}, we deduce
	\begin{equation*}
	\begin{split}
	\left|\int_{\D} f(\zeta)\overline{B^{\om\widehat{\nu}}_z(\zeta)}\omega(\zeta)\,dA(\zeta) \right|
	&\le\|h\|_{L^\infty}\int_{\D}\left|B^{\om\widehat{\nu}}_z(u)\right|\omega(u)\,dA(u)\\
	&\lesssim \|f\|_{\B}\left(\int_{0}^{|z|} \frac{\widehat{\omega}(t)}{\widehat{\omega\widehat{\nu}}(t)(1-t)}\,dt+1 \right)\\
	&\asymp \|f\|_{\B}\left(\int_{0}^{|z|} \frac{dt}{\widehat{\nu}(t)(1-t)}+1\right),\quad z\in\D.
	\end{split}
	\end{equation*}
Since it is well known that $\nu\in\Dd$ if and only if there exists $\alpha=\alpha(\nu)>0$ such that 
	$$
	\frac{\widehat{\nu}(r)}{(1-r)^\alpha}\lesssim\frac{\widehat{\nu}(t)}{(1-t)^\alpha},\quad 0\le t\le r<1,
	$$
we deduce
	$$
	\int_{0}^{|z|} \frac{dt}{\widehat{\nu}(t)(1-t)}
	\lesssim\frac{(1-|z|)^\alpha}{\widehat{\nu}(z)}\int_{0}^{|z|}\frac{dt}{(1-t)^{1+\alpha}}
	\lesssim\frac1{\widehat{\nu}(z)},\quad z\in\D,
	$$
and it follows that $\|V_{\om,\widehat{\nu}}(f)\|_{L^\infty}\lesssim\|f\|_{\B}$ for each fixed $\om,\nu\in\DDD$. 

Conversely, assume that $\|V_{\om,\widehat{\nu}}(f)\|_{L^\infty}<\infty$. Let $k\in A^1_\omega$. Then the reproducing formula for functions in $A^1_{\omega\widehat{\nu}}$ and Fubini's theorem give
	\begin{equation*}
	\begin{split}
	\left|\left\langle k,f_r\right\rangle_{A^2_\om}\right|
	&=\left|\left\langle P_{\om\widehat{\nu}}k,f_r\right\rangle_{A^2_\om}\right|
	=\left| \int_{\D} k(u)\overline{V_{\om,\widehat{\nu}}(f_r)} \omega(u)\,du\right|
	\le\|k\|_{A^1_\omega}\|V_{\om,\widehat{\nu}}(f_r)\|_{L^\infty},\quad 0<r<1.
	\end{split}
	\end{equation*}
Moreover, by Lemma~\ref{le:nuevo}, we have $\|V_{\om,\widehat{\nu}}(f_r)\|_{L^\infty}\le\|V_{\om,\widehat{\nu}}(f)\|_{L^\infty}$, $0<r<1$, and hence
 	$$
	\left|\left\langle k,f\right\rangle_{A^2_\om}\right|\le \|k\|_{A^1_\omega} \|V_{\om,\widehat{\nu}}(f)\|_{L^\infty},\quad k\in A^1_\omega.
	$$
But $\om\in\DDD$ by the hypothesis, and therefore $(A^1_\om)^\star$ is isomorphic to the Bloch space via the $A^2_\omega$-pairing with equivalence of norms~\cite[Theorem~3]{PelRat2020}. Hence $f\in\B$ with
	$$
	\|f\|_{\B}\lesssim\|V_{\om,\widehat{\nu}}(f)\|_{L^\infty}
	$$
for each fixed $\om,\nu\in\DDD$. This finishes the proof of the proposition.
\end{proof}

With these preparations we can now prove Theorem~\ref{proposition:Bloch-necessary}.

\medskip

\begin{Prf}{\em{Theorem~\ref{proposition:Bloch-necessary}.}} Let first $f\in\B$. Then Proposition~\ref{pr:Blochdescription} yields 
	$$
	\|V_{\om,\nu}(f)\|_{L^\infty}\lesssim \|f\|_{\B}
	$$ 
for each $\nu(z)=(1-|z|)^n$ with $n\in\N$. This together with the obvious embedding $\B\subset A^1_{\omega_{\log}}$ and Theorem~\ref{Theorem:hankel-characterization} shows that $h^\om_{\overline{f}}:A^p_\om\to\overline{A^p_\om}$ is bounded and $\|h^\om_{\overline{f}}\|_{A^p_\om\to\overline{A^p_\om}}\lesssim \|f\|_{\B}$.

Conversely, assume that $h^\om_{\overline{f}}:A^p_\om\to\overline{A^p_\om}$ is bounded. Then $\overline{h^\om_{\overline{f}}(1)}=f\in A^p_\omega\subset A^1_{\om_{\log}}$. Therefore, by Proposition~\ref{proposition:Bounded-necessary}, there exists $n_0=n_0(\omega)\in\N$ such that, for each $n\ge n_0$, the weight $\nu(z)=(1-|z|)^n$ satisfies $\|V_{\om,\nu}(f)\|_{L^\infty}\lesssim\|h^\om_{\overline{f}}\|_{A^p_\om\to\overline{A^p_\om}}$. This together with Proposition~\ref{pr:Blochdescription} implies $f\in\B$ with
	$$
	\|f\|_{\B}\lesssim \|V_{\om,\nu}(f)\|_{L^\infty}
	\lesssim\|h^\om_{\overline{f}}\|_{A^p_\om\to\overline{A^p_\om}}
	$$
for each fixed $n\ge n_0$. Thus the theorem is proved.
\end{Prf}

\section{Boundedness of $P_\om:\BMO(\Delta)_{\om,p}\to\B$}\label{s4}

In this section we will use Theorems~\ref{Theorem:hankel-characterization} and \ref{proposition:Bloch-necessary} to sketch of a new proof of the main theorem of the recent paper \cite{PRMZ2023}. Prior to state this result some definitions are needed. 
Let $\b(z,\z)$ denote the hyperbolic distance between the points $z$ and $\z$ in $\D$, and let $\Delta(z,r)$ stand for the hyperbolic disc of center $z\in\D$ and radius $0<r<\infty$.
 Further, let $\omega$ be a radial weight and $0<r<\infty$ such that $\omega\left(\Delta(z,r)\right)>0$ for all $z\in\D$. Then, for $f\in L^p_{\om,{\rm loc}}$ with $1\le p<\infty$, write 
    $$
    \MO_{\om,p,r}(f)(z)
		=\left(\frac{1}{\om(\Delta(z,r))}
		\int_{\Delta(z,r)}|f(\z)-\widehat{f}_{r,\om}(z)|^p\om(\z)\,dA(\z)\right)^{\frac{1}{p}},
    $$
where 
	$$
	\widehat{f}_{r,\om}(z)=\frac{\int_{\Delta(z,r)}f(\z)\om(\z)\,dA(\z)}{\om(\Delta(z,r))},\quad z\in\D.
	$$ 
The space $\BMO(\Delta)_{\om,p,r}$ consists of $f\in L^p_{\om}$ such that
    $$
    \|f\|_{\BMO(\Delta)_{\om,p,r}}=\sup_{z\in\D}\MO_{\om,p,r}(f)(z)<\infty.
    $$
It is known by \cite[Theorem~11]{PPR2020} that for each $\om\in\DDD$ there exists $r_0=r_0(\om)>0$ such that
	\begin{equation}\label{eq:intro1}
	\BMO(\Delta)_{\om,p,r}=\BMO(\Delta)_{\om,p,r_0}, \quad r\ge r_0.
	\end{equation}
We call this space $\BMO(\Delta)_{\om,p}$ and assume that the norm is always calculated with respect to a fixed $r\ge r_0$.

We write $\om\in\M$ if there exist constants $C=C(\om)>1$ and $K=K(\om)>1$ such that $\om_{x}\ge C\om_{Kx}$ for all $x\ge1$. It is known that $\Dd\subsetneq\M$ and $\DDD=\DD\cap\M$ by \cite[Proof of Theorem~3 and Proposition~14]{PelRat2020}.

The main result in \cite{PRMZ2023} reads as follows.

\begin{lettertheorem}\label{th:main}
Let $1<p<\infty$ and $\om\in\M$. Then the following statements are equivalent:
	\begin{itemize}
	\item[\rm(i)] There exists $r_0=r_0(\omega)\in(0,\infty)$ such that $\BMO(\Delta)_{\om,p,r}$ does not depend on $r$, provided $r\ge r_0$. Moreover, $P_\om:\BMO(\Delta)_{\om,p}\to\B$ is bounded;
	\item[\rm(ii)] $P_\om:L^\infty\to\B$ is bounded;
	\item[\rm(iii)] $\om\in\DD$.
	\end{itemize}
\end{lettertheorem}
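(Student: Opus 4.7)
The plan is to prove the circle (i) $\Rightarrow$ (ii) $\Rightarrow$ (iii) $\Rightarrow$ (i). The first implication is immediate from the embedding $L^\infty\subset\BMO(\Delta)_{\om,p}$ with $\|f\|_{\BMO(\Delta)_{\om,p}}\le 2\|f\|_{L^\infty}$. For (ii) $\Rightarrow$ (iii), note that $\DDD=\DD\cap\M$, so under the standing assumption $\om\in\M$ the condition $\om\in\DD$ coincides with $\om\in\DDD$; the equivalence of this containment with boundedness of $P_\om:L^\infty\to\B$ is part of the theory developed in \cite{PelRat2020} and also admits a short proof via the duality $(A^1_\om)^\star\simeq\B$ valid for $\om\in\DDD$.

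The substance of the argument lies in (iii) $\Rightarrow$ (i). Assume $\om\in\DDD$. The $r$-independence of $\BMO(\Delta)_{\om,p,r}$ for $r\ge r_0(\om)$ is \cite[Theorem~11]{PPR2020}, so the task is to prove that $P_\om:\BMO(\Delta)_{\om,p}\to\B$ is bounded. The idea is to chain together the main theorems of this paper. Since $\BMO(\Delta)_{\om,p}\subset L^p_\om\subset L^1_{\om_{\log}}$, Theorem~\ref{Theorem:hankel-characterization} produces boundedness of $h^\om_{\overline{f}}:A^p_\om\to\overline{A^p_\om}$ from $V_{\om,\nu}(f)\in L^\infty$ with $\nu(z)=(1-|z|)^n$ and $n\ge n_0(\om)$; Lemma~\ref{lemma:hankel-P} identifies $h^\om_{\overline{f}}$ with $h^\om_{\overline{P_\om(f)}}$ on the dense subspace $H^\infty\subset A^p_\om$, and Theorem~\ref{proposition:Bloch-necessary} then yields $P_\om(f)\in\B$ with $\|P_\om(f)\|_\B\asymp\|h^\om_{\overline{P_\om(f)}}\|_{A^p_\om\to\overline{A^p_\om}}$. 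The whole matter therefore reduces to establishing the pointwise estimate
	$$
	|V_{\om,\nu}(f)(z)|\lesssim\|f\|_{\BMO(\Delta)_{\om,p}},\quad z\in\D,
	$$
for some fixed $n\ge n_0(\om)$.

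To prove this pointwise estimate I would split the defining integral of $V_{\om,\nu}(f)(z)$ into a local part over the hyperbolic disc $\Delta(z,r)$, with $r\ge r_0(\om)$, and a global part over its complement. On $\Delta(z,r)$, I replace $f$ by $f-\widehat f_{r,\om}(z)$ at the cost of a constant term $\widehat f_{r,\om}(z)V_{\om,\nu}(1)(z)$ that is controlled by routine moment bounds, and then apply H\"older's inequality with the BMO exponent $p$ together with the pointwise size of $B^{\om\nu}_z$ on a fixed hyperbolic disc. On the complement I decompose into dyadic hyperbolic annuli $\Delta(z,(k+1)r)\setminus\Delta(z,kr)$, bound the oscillation of $f$ across $k$ layers by a multiple of $k\|f\|_{\BMO(\Delta)_{\om,p}}$ through telescoping of the local averages, and pair this with the sharp $L^{p'}$-estimate supplied by Lemma~\ref{kernelmix-general} applied to the modified kernel $(1-\overline z\zeta)^N B^{\om\nu}_z(\zeta)$. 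Here Lemma~\ref{room} is used to guarantee that $\om\nu=\om_{[n]}\in\DDD$ and $\widehat{\om_{[n]}}\asymp\widehat\om(1-|\cdot|)^n$, which is what makes Lemma~\ref{kernelmix-general} exploitable in this context.

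The principal obstacle is the annular part: the exponent $n$ must be chosen large enough that the factor $(1-\overline z\zeta)^N$ built into the modified kernel beats both the polynomial $k$-growth coming from the BMO-oscillation bound and the size of the $k$-th annulus measured against $\om$, leaving a summable geometric tail in $k$. It is precisely the sharp form of Lemma~\ref{kernelmix-general}, combined with the stability properties of the class $\DDD$ encoded in Lemma~\ref{room}, that makes this balance possible; any cruder version of either ingredient would break the summation.
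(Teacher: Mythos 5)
Your overall architecture for (iii)$\Rightarrow$(i) is the paper's: reduce to the estimate $\|V_{\om,\nu}(f)\|_{L^\infty}\lesssim\|f\|_{\BMO(\Delta)_{\om,p}}$ and then chain Theorem~\ref{Theorem:hankel-characterization}, Lemma~\ref{lemma:hankel-P} and Theorem~\ref{proposition:Bloch-necessary}; the handling of (i)$\Rightarrow$(ii)$\Rightarrow$(iii) via $L^\infty\subset\BMO(\Delta)_{\om,p}$ and \cite{PelRat2020} also matches. The gap is in your proof of the key estimate. The paper does not attack $f$ directly: it imports from \cite[Theorem~11(ii)]{PPR2020} the splitting $f=f_1+f_2$ with $f_1\in\BA(\Delta)_{\om,p}$ and $f_2\in\BO(\Delta)$, estimates $V_{\om,\nu}(f_1)$ by pairing the Carleson/multiplier property of $|f_1|^p\om\,dA$ with the $A^1_\om$-norm of $B^{\om\nu}_z$ (H\"older, \cite[Theorem~1]{PR2016/1}, Lemma~\ref{room}(ii)), and estimates $V_{\om,\nu}(f_2)$ using the pointwise bound $|f_2(z)-f_2(\z)|\lesssim|1-\overline{z}\z|^{2\ep}(1-|z|)^{-\ep}(1-|\z|)^{-\ep}\|f_2\|_{\BO(\Delta)}$ with an \emph{arbitrarily small} $\ep>0$, H\"older and Lemma~\ref{kernelmix-general}. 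Your local/dyadic-annuli scheme tries to bypass this decomposition, and that is where it breaks.

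Concretely: Lemma~\ref{kernelmix-general} is a whole-disc norm estimate, and for $\om\in\DDD$ no pointwise or cell-wise bounds for $B^{\om\nu}_z$ are available, so the only way to extract decay on the annulus $A_k=\{\z:kr\le\b(z,\z)<(k+1)r\}$ is through $1\lesssim e^{-2kr\ep}|1-\overline{z}\z|^{2\ep}\left((1-|z|)(1-|\z|)\right)^{-\ep}$; keeping the homogeneity right then forces the factor $(1-|\z|)^{-\ep p'}$ into the integration weight, and by Lemma~\ref{room}(iii) (equivalently, integrability and admissibility of $\om_{[-\ep p']}$) this is affordable only for $\ep p'\le\g_0(\om)$ --- a smallness restriction that taking $n$ or $N$ large does not relax (in the model case the annular kernel mass concentrated near the boundary ``below'' $z$ decays at a rate independent of $n$). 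On the other hand, your telescoping bound pairs with the measure of the $k$-th annulus, which by Lemma~\ref{Lemma:weights-in-D-hat}(ii) can be as large as a constant times $e^{2(k+1)r(\b(\om)+1)}\widehat{\om}(z)(1-|z|)$, so the $k$-th term of your series grows like $(k+1)e^{2kr(\b(\om)+1)/p}$ and summability demands $\ep>(\b(\om)+1)/p$. The two requirements are incompatible in general: already for $\om(z)=(1-|z|)^{a-1}$ they read $\ep>(a+1)/p$ and $\ep p'<a$, which clash unless $p>2+1/a$ (and in addition Lemma~\ref{kernelmix-general} needs the exponent $p'\ge2$). Annulus-by-annulus H\"older is also intrinsically lossy, since on $A_k$ the $L^p_\om$-mass of $f-\widehat{f}_{r,\om}(z)$ sits in the outer part of the annulus while the $L^{p'}_\om$-mass of the kernel sits near the boundary under $z$; repairing that mismatch would require local kernel estimates which the class $\DDD$ does not provide. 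The paper's $\BA+\BO$ splitting (equivalently, $f=(f-\widehat{f}_{r,\om})+\widehat{f}_{r,\om}$) is precisely the device that avoids all of this: the size part is handled by Carleson duality with no localization of the kernel, and the oscillation part has pointwise logarithmic growth, which an arbitrarily small conformal power $\ep$ --- within the range allowed by Lemma~\ref{room}(iii) --- already beats. If you want a proof along your lines, you should therefore fold your local/telescoping analysis into exactly this two-term decomposition rather than into a dyadic sum over hyperbolic annuli.
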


We now show how this result can be obtained by using Theorems~\ref{Theorem:hankel-characterization} and \ref{proposition:Bloch-necessary}. Bearing in mind the continuous embedding $L^\infty\subset\BMO(\Delta)_{\omega,p}$, the comments presiding the statement and \cite[Theorem~1]{PelRat2020}, we only need to prove that $P_\om:\BMO(\Delta)_{\om,p}\to\B$ is bounded when $\om\in\DDD$. To see this, let $f\in\BMO_{\om,p}(\Delta)$. Then, by \cite[Theorem~11(ii)]{PPR2020} and its proof, $f$ can be represented in the form $f_1+f_2$, where $f_1\in\BA(\Delta)_{\om,p}$ and $f_2\in\BO(\Delta)$ such that $\|f_1\|_{\BA(\Delta)_{\om,p}}+\|f_2\|_{\BO(\Delta)}\lesssim \|f\|_{\BMO(\Delta)_{\om,p}}$. Here, $\BO(\Delta)$ is the space of Lipschitz continuous functions in the hyperbolic metric, and $f_1\in \BA(\Delta)_{\om,p}$ if the multiplication operator $M_f(g)(z)=f(z)g(z)$ is bounded from $A^p_\omega$ to $L^p_\omega$. Let us denote $\nu(z)=(1-|z|)^n$ for all $z\in\D$. If $f_1\in\BA(\Delta)_{\om,p}$, H\"older's inequality, \cite[Theorem~1]{PR2016/1} and Lemma~\ref{room}(ii) imply
	\begin{equation}\label{eq:f1}
	\begin{split}
	\|V_{\om,\nu}(f_1)\|_{L^\infty}\lesssim \|f_1\|_{\BA(\Delta)_{\om,p}}.
	\end{split}
	\end{equation}
Moreover, the inequality
	$$
	|f_2(z)-f_2(\z)|
	\lesssim(1+\beta(z,\z))\|f_2\|_{\BO(\Delta)} 
	\lesssim\frac{|1-\overline{z}\z|^{2\ep}}{(1-|z|)^{\ep}(1-|\z|)^{\ep}}\|f\|_{\BO(\Delta)},\quad z,\zeta\in\D, \quad \ep>0,
	$$
H\"older's inequality, Lemma~\ref{kernelmix-general} and \cite[Theorem~1]{PR2016/1}
yield 
 \begin{equation}\label{eq:f2}
	\begin{split}
	\|V_{\om,\nu}(f_2)\|_{L^\infty}\lesssim \|f_2\|_{\BO(\Delta)}
	\end{split}
	\end{equation}
for each $n\ge n_0$, where $n_0=n_0(\omega)$. Now that $\BMO(\Delta)_{\om,p}\subset L^1_{\om_{\log}}$ for each $1<p<\infty$, we may combine Theorem~\ref{Theorem:hankel-characterization} with \eqref{eq:f1} and \eqref{eq:f2}, to deduce that $h_{\overline{f}}^\om: A^s_\om\to\overline{A^s_\om}$ is bounded for any $1<s<\infty$, and
	\begin{equation*}
	\begin{split}
	\|h_{\overline{f}}^\om\|_{A^s_\om\to\overline{A^s_\om}}
	&\lesssim \|V_{\om,\nu}(f)\|_{L^\infty}
	\le\|V_{\om,\nu}(f_1)\|_{L^\infty}+ \|V_{\om,\nu}(f_2)\|_{L^\infty}\\
	&\lesssim\|f_1\|_{\BA(\Delta)_{\om,p}}+\|f_2\|_{\BO(\Delta)}
	\lesssim\|f\|_{\BMO(\Delta)_{\om,p}}.
	\end{split}
	\end{equation*}
Therefore Theorem~\ref{proposition:Bloch-necessary} and Lemma~\ref{lemma:hankel-P} yield $P_\omega(f)\in\B$ and 
  $$
	\|P_\omega(f)\|_{\B}
	\asymp\|h_{\overline{P_\om(f)}}^\om\|_{A^{s}_\om\to\overline{A^{s}_\om}}
  =\|h_{\overline{f}}^\om\|_{A^{s}_\om\to\overline{A^{s}_\om}}
	\lesssim\|f\|_{\BMO(\Delta)_{\om,p}}.
	$$
Hence $P_\om:\BMO(\Delta)_{\om,p}\to\B$ is bounded.

\bibliographystyle{amsplain}

\end{document}